\title{Conflict-Free Colouring of Subsets}
\author{Bruno Jartoux}{Ben-Gurion University of the Negev, Be'er-Sheva, Israel}{jartoux@post.bgu.ac.il}{https://orcid.org/0000-0002-5341-1968}{Research partially supported by the Israel Science Foundation (grant no. 1065/20).}
\author{Chaya Keller}{Ariel University, Ariel, Israel}{chayak@ariel.ac.il}{https://orcid.org/0000-0001-6400-3946}{Research partially supported by the Israel Science Foundation (grant no. 1065/20).}
\author{Shakhar Smorodinsky}{Ben-Gurion University of the Negev, Be'er-Sheva, Israel}{shakhar@math.bgu.ac.il}{https://orcid.org/0000-0003-3038-6955}{Research partially supported by the Israel Science Foundation (grant no. 1065/20).}
\author{Yelena Yuditsky}{Université libre de Bruxelles, Brussels, Belgium }{yuditskyL@gmail.com}{https://orcid.org/0000-0002-6467-3437}{Research supported by the Belgian National Fund for Scientific Research (FNRS), through PDR grant BD-OCP.}
\authorrunning{B. Jartoux, C. Keller, S. Smorodinsky, and Y. Yuditsky}
\keywords{conflict-free, hypergraph colouring, geometric hypergraphs}
\newcommand*{\Positives}{\setN}
\newcommand{\R}{\mathcal{R}}
\newcommand{\CFP}{\chi^2_{CF}}
\newcommand{\CFPt}{\chi^t_{CF}}
\newcommand{\CF}{\chi_{CF}}
\DeclareMathOperator{\Del}{Del}
\newcommand*{\iinterval}[2]{\llbracket #1 \ldots #2 \rrbracket }
\DeclareMathOperator{\twr}{twr}
\newcommand{\E}{\mathcal{E}}
\newcommand{\D}{\mathcal{D}}
\newcommand\famB{\mathcal B}
\newcommand\famC{\mathcal C}
\newcommand\setN{\mathbb N}
\newcommand\setR{\mathbb R}
\newcommand{\chium}{\chi_{\text{UM}}}
\newcommand{\chitum}{\chi_{t\text{-UM}}}
\begin{document}

\maketitle

\begin{abstract}
We introduce and study conflict-free colourings of $t$-subsets in hypergraphs. In such colourings, one assigns colours to all subsets of vertices of cardinality $t$ such that in any hyperedge of cardinality at least $t$ there is a uniquely coloured $t$-subset. 

The case $t=1$, i.e.,\@ vertex conflict-free colouring, is a well-studied notion that has been originated in the context of frequency allocation to antennas in cellular networks. It has caught the attention of researchers both from the combinatorial and algorithmic points of view. A special focus was given to hypergraphs arising in geometry.

Already the case  $t=2$ (i.e., \@ colouring pairs) seems to present a new challenge. Many of the tools used for conflict-free colouring of geometric hypergraphs rely on hereditary properties of the underlying hypergraphs. When dealing with subsets of vertices, the properties do not pass to subfamilies of subsets. Therefore, we develop new tools, which might be of independent interest.

 (i) For any fixed $t$, we show that the $\binom n t$ $t$-subsets in any set $P$ of $n$ points in the plane can be coloured with $O(t^2 \log^2 n)$ colours so that any axis-parallel rectangle that contains at least $t$ points of $P$ also contains a uniquely coloured $t$-subset.
 
 (ii) For a wide class of ``well behaved" geometrically defined hypergraphs, we provide near tight upper bounds on their $t$-subset conflict-free chromatic number.
 
  \noindent For $t=2$ we show that for each of those ``well -behaved" hypergraphs $H$, 
  the hypergraph $H'$ obtained by taking union of two hyperedges from $H$, admits a $2$-subset conflict-free colouring with roughly the same number of colours as $H$. For example, we show that the $\binom n 2$ pairs of points in any set $P$ of $n$ points in the plane  can be coloured with $O(\log n)$ colours such that for any two discs $d_1,d_2$ in the plane with $|(d_1\cup d_2)\cap P|\geq 2$ there is a uniquely (in $d_1 \cup d_2$) coloured pair.
 
 (iii) We also show that there is no general bound on the $t$-subset conflict-free chromatic number as a function of the standard conflict-free chromatic number already for $t=2$. 

\end{abstract}

\section{Introduction}
\label{sec:introduction}

A {\em hypergraph} $H$ is a pair $(V,\E)$ where  $\E\subseteq 2^V$. The elements of $V$ and $\E$ are referred to as {\em vertices} and {\em hyperedges} respectively. 

For $k\in \setN$, a {\em $k$-colouring} of the vertices of $H$ is a function $\varphi$ from $V$ to a set of cardinality $k$ e.g., the set $\{1,\ldots,k\}$.
The colouring is \emph{proper} if for every $h \in \E$ with $|h| \geq 2$ there exist $x,y \in h$ with $\varphi(x) \neq \varphi(y)$. This extends the definition of the classical proper colouring of a graph. 
Denote by $\chi(H)$ the least integer $k$ such that $H$ admits a proper $k$-colouring.

The following notion of CF-colouring is a further restriction of proper colouring:

\begin{definition}\label{def:cf-coloring}
A {\em conflict-free colouring} ({\em CF-colouring} for short) of $V$ is a colouring $\varphi$ such that for any nonempty $h \in \E$ at least one vertex  $x \in h$ is uniquely coloured, meaning $\varphi(y) \neq \varphi(x)$ for all $y \in h \setminus \{x\}$. Denote by $\CF(H)$ the least integer $k$ such that $H$ admits a CF-colouring with $k$ colours. Clearly $\CF(H)\geq \chi(H)$.
\end{definition}

This notion was introduced to model radio frequencies allocations to antennas while avoiding interference \cite{ELRS,SmPHD}. This spawned a new area of research in combinatorics and computational geometry, with dozens of followup papers and theses. For more on CF-colouring and its applications see the survey \cite{CF-survey} and the references therein. 

In this paper we study an extension of the notion of CF-colouring to subsets of vertices.

\begin{definition} Let $H=(V,\E)$ be a hypergraph and let $t\in \setN$.
A \emph{$t$-subset-CF-colouring} of $H$ with $k$ colours is a function $\varphi$ from $\binom{V}{t}$ to a $k$-element set such that for every hyperedge $h\in \E$ with $|h|>t$ there exists a $t$-subset $s \in \binom h t$ whose colour $\varphi (s)$ is distinct from all colours of other $t$-subsets in $\binom h t$. The \emph{$t$-subset-CF-chromatic number} $\chi^t_{CF}(H)$ is the least integer $k$ such that $H$ admits a $t$-subset-CF-colouring with $k$ colours.
\end{definition}

\subsection{Sparse hypergraphs}

We show that in hypergraphs $H$ with $n$ vertices exhibiting a certain kind of sparsity property, as is the case in many geometrically defined hypergraphs we have  $\CFPt(H) = O(\log n)$ where the constant of proportionality in the big-Oh notation depends on $t$ and the the sparsity parameter. To make this statement precise
we need the following definitions:

The {\em Delaunay graph} of a hypergraph $H=(V,\E)$ is the graph $\Del(H)=(V,\{h\in \E\colon |h|=2\})$. For a subset $V' \subset V$ the {\em induced sub-hypergraph} $H[V']$ is the hypergraph $(V', \{h \cap V': h \in \E\})$.

\begin{definition}
    A hypergraph $H=(V,\E)$ has the {\em Hereditary Linear Delaunay} (HLD) property with parameter $c \in \setR_{>0}$ if for every subset of vertices $V'\subset V$ the graph $\Del (H[V'])$ has at most $c|V'|$ edges.
\end{definition}

\begin{theorem}\label{thm:linear-delaunay}
Let $H=(V,\E)$ be a hypergraph. If $H$ has the HLD property with parameter $c\in \setR_{>0}$ then $\CF^t(H)=O(ct^2 \log |V|)$. Moreover, there are such hypergraphs for which $\CF^t(H)=\Omega(\log |V|)$.
\end{theorem}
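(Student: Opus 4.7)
The plan is to adapt Smorodinsky's iterative CF-colouring framework to the setting of $t$-subsets. Start with $T_1:=\binom{V}{t}$; in round $j$, select a subset $U_j\subseteq T_j$ satisfying (i)~$|U_j|\geq |T_j|/K$ for some $K=O(ct)$, and (ii)~for every $h\in\E$ with $|\binom{h}{t}\cap T_j|\geq 2$, $\binom{h}{t}\cap T_j\not\subseteq U_j$. Then assign colour $j$ to every element of $U_j$ and set $T_{j+1}:=T_j\setminus U_j$. Property~(ii) guarantees that the overall colouring is a $t$-subset CF-colouring: for each $h\in\E$ with $|h|>t$, at the last round $j^*$ in which $|\binom{h}{t}\cap T_{j^*}|\geq 2$, at least one $t$-subset of $h$ must survive into $T_{j^*+1}$ and later receives a strictly larger colour, which is then unique in $\binom{h}{t}$ because all other $t$-subsets of $h$ were coloured in rounds at most $j^*$. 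Property~(i) bounds the number of colours by $O(K\log|T_1|)=O(Kt\log|V|)$, so with $K=O(ct)$ we reach the target $O(ct^2\log|V|)$.

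The technical core is producing such a $U_j$. I would obtain $U_j$ as the largest colour class of a proper colouring, with $O(ct)$ colours, of the auxiliary hypergraph $H^{(t)}[T_j]$ whose vertex set is $T_j$ and whose hyperedges are the sets $\binom{h}{t}\cap T_j$ (of cardinality at least $2$) for $h\in\E$. To build this proper colouring I would use HLD: with $V_j:=\bigcup_{s\in T_j}s$, the graph $\Del(H[V_j])$ has at most $c|V_j|$ edges and is therefore $2c$-degenerate, yielding both a degeneracy ordering $\sigma$ of $V_j$ (each vertex has at most $2c$ $\sigma$-later Delaunay-neighbours) and a proper $(2c{+}1)$-colouring $\varphi$ of $\Del(H[V_j])$. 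Each $t$-subset $s\in T_j$ would be labelled by a pair consisting of $\varphi$ evaluated at a canonical vertex of $s$ chosen via $\sigma$ (supplying $O(c)$ options) and an $\{1,\ldots,t\}$-valued invariant of $s$ recording the role of that vertex inside $s$ (supplying $O(t)$ options), giving $O(ct)$ labels in total. For every $h\in\E$ whose trace in $V_j$ has size at least $t+1$, the canonical vertex of a $t$-subset of $h$ varies as the subset does, producing distinct $\varphi$-values by properness of $\varphi$ and hence non-monochromaticity of $\binom{h}{t}\cap T_j$.

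For the lower bound $\CFPt(H)=\Omega(\log|V|)$, take $H$ to be the interval hypergraph on $\{1,\ldots,n\}$, whose Delaunay graph is a path, so $H$ has HLD with $c=1$. The classical halving argument giving $\CF(H)=\Omega(\log n)$ for intervals adapts to $t$-subsets: the interval $\{1,\ldots,n\}$ must contain a uniquely coloured $t$-subset, and removing the colour of that subset and recursing on each half forces a new colour at each of $\Omega(\log n)$ recursive levels.

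The main obstacle is the construction of the $O(ct)$-proper colouring of $H^{(t)}[T_j]$. The HLD hypothesis directly controls only pairs via $\Del(H[V_j])$, whereas hyperedges of $H^{(t)}[T_j]$ arise from hyperedges $h\in\E$ of arbitrary size whose internal pairs need not all lie in $\Del(H[V_j])$. The delicate step is designing the ``canonical vertex plus invariant'' labelling so that it simultaneously produces at least two labels on $\binom{h}{t}\cap T_j$ for every such $h$, while keeping the total label count at $O(ct)$; this is where the interplay between the degeneracy ordering $\sigma$, the proper colouring $\varphi$, and the internal structure of the $t$-subsets inside $h$ must be balanced.
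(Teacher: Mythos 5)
Your overall skeleton (peeling the $t$-subsets round by round, UM-style correctness, and the interval halving argument for the $\Omega(\log n)$ lower bound, which is essentially the paper's own) is sound, but the upper bound has a genuine gap exactly at the step you flag as the ``main obstacle'', and the sketch you give for it does not work. Properness of $\varphi$ on $\Del(H[V_j])$ buys you nothing here: the Delaunay graph consists only of the hyperedges of cardinality exactly $2$, and in the abstract HLD setting a large hyperedge $h\in\E$ need not contain any Delaunay edge at all (hyperedges are not downward closed and there is no geometric ``shrinking''), so the canonical vertices of two $t$-subsets of $h$ are in general non-adjacent in $\Del(H[V_j])$ and may well share a $\varphi$-colour; moreover many distinct $t$-subsets of $h$ share the same $\sigma$-first vertex, so the claim that ``the canonical vertex varies as the subset does'' is false, and an $O(t)$-valued role invariant has no mechanism forcing two labels on a trace $\binom{h}{t}\cap T_j$ that may consist of just two generic $t$-subsets of a huge $h$. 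There is also a structural problem: after the first round, $T_j$ is an arbitrary subfamily of $\binom{V}{t}$, not of the form $\binom{V'}{t}$ for a vertex subset $V'$, while the HLD hypothesis only controls induced sub-hypergraphs $H[V']$; so none of the sparsity machinery applies hereditarily to your auxiliary hypergraph $H^{(t)}[T_j]$. This non-hereditary behaviour of subset families is precisely the obstruction that makes the problem nontrivial, and your proposal does not overcome it.

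The way to close the gap is to keep all of the iteration at the vertex level and pass to $t$-subsets only once, at the very end. Using the Clarkson--Shor-type consequence of HLD (the number of pairs of vertices contained in some hyperedge of size at most $t$ is $O(ct|V|)$), an induction on $|V|$ gives a $(t+1)$-colourful colouring of every induced sub-hypergraph with $O(ct)$ colours; plugging this into the iterative framework yields a $t$-UM colouring of the vertices, i.e.\ a colouring in which the top $\min\{|h|,t\}$ colours of every hyperedge are unique, with $O(ct\log|V|)$ colours. Then colour each $t$-subset by the \emph{sum} of its vertex colours: in any hyperedge of size greater than $t$ the $t$ largest vertex colours are distinct and unique, so the $t$-subset consisting of them is the unique maximiser of the sum, giving a $t$-subset-CF colouring with $O(t\cdot ct\log|V|)=O(ct^2\log|V|)$ colours. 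In other words, the reduction from $t$-subsets to vertices should be done via a single summation argument applied to a sufficiently strong vertex colouring, not by running the peeling procedure on the family of $t$-subsets itself.
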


In order to prove~\cref{thm:linear-delaunay} we first need to prove a common generalization of previous results on so-called {\em strong CF-colouring} \cite{ABG+05, hks09}. Then, we use it as an auxiliary colouring for subset-CF-colouring.

Next, we prove the following result on hypergraphs which are derived from ``well-behaved" hypergraphs by allowing union of hyperedges:
Given a hypergraph $H=(V,\E)$, define a new hypergraph  on the same vertex set by $H^{\cup} = (V, \{  e \cup f \colon e, f \in \E\})$.
\begin{theorem}
\label{thm:union}
	If $H=(V,\E)$ has the HLD property for some parameter $c\in\setR_{>0}$, then $\CFP(H^{\cup})=O(c\log |V|)$.
\end{theorem}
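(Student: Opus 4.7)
The plan is to adapt the level-based iterative scheme underlying the proof of \cref{thm:linear-delaunay} at $t=2$. A first observation is that \cref{thm:linear-delaunay} cannot be invoked on $H^\cup$ as a black box: the HLD property is not inherited by $H^\cup$. Indeed, if a set $V'\subseteq V$ contains many vertices $u$ that are ``singleton shadows'' of distinct hyperedges of $H$, meaning $e\cap V'=\{u\}$ for some $e\in\E$, then every pair of such vertices becomes a size-$2$ hyperedge of $H^\cup[V']$, so $\Del(H^\cup[V'])$ can be of quadratic size even when $\Del(H[V'])$ is linear. The colouring of $H^\cup$ must therefore be built directly from the HLD property of $H$ itself.

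\emph{Construction.} Set $V_1=V$ and run rounds $i=1,2,\ldots$. In round $i$, the hereditary HLD property gives $|E(\Del(H[V_i]))|\le c|V_i|$, so $\Del(H[V_i])$ is $2c$-degenerate and contains an independent set $I_i\subseteq V_i$ of size at least $|V_i|/(2c+1)$. Assign $\ell(v)=i$ to every $v\in I_i$ and set $V_{i+1}=V_i\setminus I_i$. After $r=O(c\log|V|)$ rounds every vertex has received a level in $\iinterval{1}{r}$. Colour each pair $\{u,v\}$ by a function of $(\min(\ell(u),\ell(v)),\,\mathbf{1}[\ell(u)=\ell(v)])$, together with a constant number of ``tag'' bits to be fixed by the verification step; this uses $O(c\log|V|)$ colours in total.

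\emph{Verification and main obstacle.} For a hyperedge $h = e\cup f$ of $H^\cup$, I would define $i^\ast(h)$ as the smallest round $i$ for which both $e\cap V_i$ and $f\cap V_i$ are of size at most $2$. At that round, any such set of size exactly $2$ is a size-$2$ hyperedge of $H[V_{i^\ast}]$ and hence an edge of $\Del(H[V_{i^\ast}])$, so the independence of $I_{i^\ast}$ forces it to contain at most one of its vertices. From this one can single out a concrete pair in $h$---typically the pair formed by the unique vertex of $I_{i^\ast}\cap e$ and the unique vertex of $I_{i^\ast}\cap f$, or an analogue when $e$ and $f$ shrink in different rounds---whose colour is not shared by any other pair of $\binom{h}{2}$. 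The delicate point, which is the main obstacle, is that the Delaunay graph only constrains pairs that form size-$2$ hyperedges of $H[V_i]$, while we must also control ``cross'' pairs $\{u,v\}$ with $u\in e\setminus f$ and $v\in f\setminus e$ that do not arise from any single hyperedge of $H$. The required case analysis---in particular when $e\cap V_i$ and $f\cap V_i$ reach size $2$ at different rounds---dictates the precise recipe for the tag bits of the colouring, and I expect it to be the core technical content of the proof.
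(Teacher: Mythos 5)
Your opening observation---that the HLD property is not inherited by $H^{\cup}$, so \cref{thm:linear-delaunay} cannot be invoked as a black box---is correct and matches the paper's motivation. After that, however, what you give is a plan rather than a proof, and the two pieces you defer are exactly where the content lies: the pair-colouring is never fully specified (the ``tag bits'' are left to be determined by a verification that is not carried out), and the verification sketch has concrete holes. First, the level function obtained by repeatedly extracting independent sets of $\Del(H[V_i])$ is too weak a primitive: Delaunay-independence only prevents a surviving hyperedge of size exactly $2$ from being swallowed whole, so a hyperedge $e$ of $H$ whose last surviving portion has three or more vertices can have all of them placed in one $I_i$. Hence your levels are not even a UM-colouring of $H$, let alone carriers of the second-order uniqueness that tie-breaking inside $e\cup f$ requires. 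Second, the designated pair ``the unique vertex of $I_{i^\ast}\cap e$ together with the unique vertex of $I_{i^\ast}\cap f$'' need not exist: nothing forces $I_{i^\ast}$ to meet $e\cap V_{i^\ast}$ or $f\cap V_{i^\ast}$ at all. Third, aggregating a pair by $\min(\ell(u),\ell(v))$ cannot isolate a unique pair, since every pair through the lowest-level vertex of the hyperedge shares that value; a max- or sum-type aggregation is needed, and with it a reason why ties can be broken.

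For comparison, the paper's proof bypasses all of this. By \cref{thm:Delaunay} with $t=2$, $H$ admits a $2$-UM-colouring $\psi$ with $O(c\log |V|)$ colours, i.e.\ in every hyperedge of $H$ the two largest colours are unique. Each pair is coloured by $\bigl(\psi(x)+\psi(y),\,\mathbf{1}[\psi(x)=\psi(y)]\bigr)$, and the check for $h=h_1\cup h_2$ is a short case analysis on the three largest values of $\psi$ in $h$: the top pair has strictly maximal sum unless the second and third values tie, and in the tied case the $2$-UM property inside each $h_i$ forces that colour to appear exactly twice in all of $h$, so the tied pair is singled out by the equality bit. In particular, the ``cross pairs'' you identify as the main obstacle are dissolved rather than attacked: no control of pairs straddling $h_1$ and $h_2$ via Delaunay edges is needed, only uniqueness of the two top colours within each single hyperedge of $H$. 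To salvage your construction you would essentially have to upgrade your auxiliary step to the $3$-colourful colourings of \cref{thm:Delaunay} (built from \cref{lem:k_good_pairs}) and run \cref{alg:gen-CF-framework} to recover a $2$-UM colouring, at which point you are back to the paper's argument.
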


 Given two families of sets $\famB$ and $\famC$, the \emph{intersection hypergraph} of $\famB$ with respect to $\famC$ is the hypergraph $H(\famB,\famC)$ on the vertex-set $\famB$, in which any $c \in \famC$ defines a hyperedge $\{b \in \famB  \colon  b \cap c \neq \emptyset \}$.

Many geometrically defined hypergraphs (also called {\em range spaces})
admit the HLD property. This includes $H(P,\mathcal{D})$ where $P$ is a set of points and $\mathcal{D}$ is the set of all discs in $\setR^2$ (since $\Del(H(P,\mathcal D))$ is then the standard Delaunay graph, which is planar). This was also extended to $H(D,\mathcal{D})$ for a finite family of discs $D$ \cite{KellerSm20}. Another example is $H(P,\mathcal{H})$ where $P$ is a set of points and $\mathcal{H}$ is the set of all halfspaces in $\setR^3$ (by \cite[Lem.~3.1]{smoro}, combined with the fact that the union of two planar graphs on $n\geq 3$ vertices has at most $6n-12$ edges). 

Pseudo-disc families are another example:
\begin{definition}
	A finite set of simple closed Jordan regions in $\setR^2$ is a \emph{family of pseudo-discs} if the boundaries of any two regions in the family intersect at most twice.
\end{definition}

\begin{theorem}[\cite{Keszegh18}]\label{thm:Kesegh}
If $\famB$ and $\famC$ are families of pseudo-discs, then $\Del( H(\famB,\famC))$ is planar. 
\end{theorem}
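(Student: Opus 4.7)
The plan is to build an explicit planar embedding of $\Del(H(\famB,\famC))$ by a topological construction. For each $b \in \famB$, fix an interior point $v_b$ as the vertex representing $b$. For each Delaunay edge $e=\{b_1,b_2\}$, fix a witness $c_e \in \famC$, i.e.\ a pseudo-disc intersecting exactly $b_1$ and $b_2$ in $\famB$, together with points $p_e^1 \in b_1 \cap c_e$ and $p_e^2 \in b_2 \cap c_e$. Draw $e$ as a simple curve $\gamma_e$ from $v_{b_1}$ to $v_{b_2}$ assembled from three pieces: $v_{b_1} \to p_e^1$ inside $b_1$, $p_e^1 \to p_e^2$ inside $c_e$, and $p_e^2 \to v_{b_2}$ inside $b_2$.

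Two easy observations handle most conflicts between curves. Inside each witness $c_e$ only the single curve $\gamma_e$ passes, because by the definition of a witness $c_e$ meets no $b \in \famB$ other than $b_1,b_2$. Inside each $b \in \famB$, the pieces of the $\gamma$-curves incident to $v_b$ form a star with leaves $\{p_e^b : e \ni b\}$ lying inside the topological disc $b$; since $b$ is simply connected, these stubs can be drawn pairwise non-crossing (fix any cyclic order of stubs at $v_b$ and extend to simple disjoint curves inside $b$ by a standard topological argument).

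The subtle point, which I expect to be the heart of the proof, is that two pseudo-discs $b_i,b_j \in \famB$ may themselves overlap, so curves drawn locally inside $b_i$ and $b_j$ could cross inside $b_i \cap b_j$; similarly for two overlapping witnesses. Here the pseudo-disc hypothesis is essential: any two regions in $\famB \cup \famC$ have boundaries meeting at most twice, so each pairwise intersection is a topological disc with only two boundary arcs. I would exploit this by defining the curves $\gamma_e$ through a canonical routing rule in the planar arrangement of $\famB \cup \famC$, then performing local crossing-removal exchanges: whenever $\gamma_e$ and $\gamma_{e'}$ cross inside some overlap cell, swap the two sub-arcs between consecutive crossings. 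The pseudo-disc property guarantees that each swap reduces the total number of crossings, so the process terminates in a planar drawing.

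The main obstacle is to certify this exchange argument: when a swap is performed between $\gamma_e$ and $\gamma_{e'}$ inside an intersection region, one must verify that no new crossings are introduced with any third curve $\gamma_{e''}$. I expect to handle this by induction on the total number of crossings in the drawing, combined with the key topological fact that the intersection of any two pseudo-discs is a simply connected region bounded by just two arcs, so a local swap is always well-defined and affects only the two curves involved. A possible alternative attack, worth trying if the exchange bookkeeping becomes unwieldy, is to reduce to the case of genuine discs by a simultaneous isotopy argument and then appeal to the classical planarity of the Delaunay graph of points with respect to discs.
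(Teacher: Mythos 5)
You should first note that the paper does not prove \cref{thm:Kesegh} at all: it is imported verbatim from Keszegh \cite{Keszegh18}, so there is no in-paper argument to compare with, and the proof given there is a substantial piece of work rather than a short drawing argument. Your setup is sensible as far as it goes: placing a vertex $v_b$ inside each $b\in\famB$, routing each edge $e=\{b_1,b_2\}$ through a witness $c_e$, observing that the middle arc meets no member of $\famB$ other than $b_1,b_2$, and fanning out the stubs inside each $b$ are all fine. But these steps only organise the drawing; they do not yet use the pseudo-disc hypothesis anywhere essential, and the place where you defer to it --- the crossing-removal exchange --- is exactly where the argument breaks down.

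The gap is that the uncrossing procedure is not sound as described. Swapping the two sub-arcs at a \emph{single} crossing of $\gamma_e$ and $\gamma_{e'}$ reconnects the four endpoints in the wrong pairing, so the result is no longer a drawing of the edges $e$ and $e'$; crossings between two fixed edges can only be cancelled in pairs, hence the method can at best eliminate crossings when each pair of edges crosses an \emph{even} number of times. Nothing in your sketch rules out two independent edges crossing exactly once, and ruling that out is essentially the whole content of the theorem: the assertion that ``each swap reduces the total number of crossings, so the process terminates in a planar drawing'' presupposes that a crossing-free drawing exists, i.e.\ it assumes planarity. If you wanted to argue via parity you would need the strong Hanani--Tutte theorem together with a proof that in your canonical routing every pair of independent edges crosses evenly, and that evenness claim is where the pseudo-disc property must do real work; note that a crossing of $\gamma_e$ with $\gamma_{e'}$ lives in $(b_1\cup c_e\cup b_2)\cap(b_1'\cup c_{e'}\cup b_2')$, a region built from up to six pseudo-discs, not in a single two-arc lens $b_i\cap b_j$, so the ``only two boundary arcs'' picture does not control it. The fallback you mention also fails: pseudo-disc families are in general not isotopic (nor combinatorially equivalent) to families of round discs, and even for genuine discs the statement you would need is planarity of the Delaunay graph of \emph{discs} with respect to discs, which is not the classical Delaunay graph of a point set and is itself a nontrivial result. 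So the proposal has a genuine missing idea at its central step and does not constitute a proof of \cref{thm:Kesegh}.
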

In particular,  $H(\famB, \famC)$ has the HLD property with $c=3$.

For all these hypergraphs, denoting by $n$ the number of vertices, Theorem~\ref{thm:linear-delaunay} and Theorem~\ref{thm:union} give $\CFPt(H)=O(t^2 \log n)$ and $\CFP(H^{\cup})=O(\log n)$.

\subsection{Axis-parallel rectangles}
Consider  $H=H(P,\mathcal{R})$ where $P$ consists of $n$ points and $\mathcal{R}$ of axis-parallel rectangles, in $\setR^2$.

The problem of bounding $\CF(H)$ as a function of $n$ remains elusive after almost two decades of research; there is an exponential gap between the best known upper and lower bounds of $O(n^{0.368})$ and $\Omega(\frac{\log n}{\log^2 \log n})$ \cite{Chan12, ChenPST08}.

In strong contrast to this for $t\geq 2$ we prove the following result:

\begin{theorem}\label{thm:axis-parralel-rectangles}
Let $t\ge 2$ be an integer and let $P$ be a set of $n$ points and $\mathcal{R}$ a set of axis-parallel rectangles in the plane. Then $\chi^{t}_{CF}(H(P,\mathcal{R}))=O(t^2 \log^2 n)$.
\end{theorem}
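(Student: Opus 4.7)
My plan is to reduce the 2D axis-parallel rectangle problem to a hierarchy of 1D $y$-interval sub-problems by building a balanced binary search tree on the $x$-coordinates of $P$, and to apply \cref{thm:linear-delaunay} at each node of this tree.

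Concretely, I build a balanced binary search tree $T$ on the sorted $x$-coordinates of $P$; each internal node $v$ corresponds to a vertical slab with point set $P_v \subseteq P$, and $T$ has depth $O(\log n)$. For each node $v$, the hypergraph on $P_v$ whose hyperedges are $y$-intervals of $P_v$ has its Delaunay graph equal to a single path, so it has the HLD property with constant $c=1$; applying \cref{thm:linear-delaunay} yields a $t$-subset-CF colouring $\psi_v$ of this hypergraph using $O(t^2 \log n)$ colours. I then define the global colouring of $\binom{P}{t}$ by
\[
\varphi(s) \;:=\; \bigl(\,\mathrm{depth}_T(\mathrm{lca}_T(s)),\ \psi_{\mathrm{lca}_T(s)}(s)\,\bigr).
\]
The total palette size is $O(\log n) \cdot O(t^2 \log n) = O(t^2 \log^2 n)$, as claimed.

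For correctness, fix a rectangle $R = I_x \times I_y$ with $|R \cap P| \geq t$. The canonical decomposition of $I_x$ in $T$ partitions $R \cap P$ into at most $O(\log n)$ pieces $R \cap P_{u_1}, \ldots, R \cap P_{u_k}$, where each $u_i$ is a canonical node satisfying $\pi_x(P_{u_i}) \subseteq I_x$; consequently each piece is a $y$-interval restricted to $P_{u_i}$. The goal is to produce a $t$-subset $s^\star \subseteq R\cap P$ whose colour under $\varphi$ is unique in $R \cap P$. If some piece has $\ge t$ points, I descend in $T$ from the corresponding canonical node $u_i$: at each visited node $v$ (still with $\pi_x(P_v)\subseteq I_x$ and $|R\cap P_v|\ge t$) I apply $\psi_v$ to the $y$-interval on $P_v$ to extract the uniquely coloured candidate $s_v$; if $s_v$ splits across the two children of $v$ I output $s_v$ as $s^\star$ with $\mathrm{lca}_T(s^\star)=v$, otherwise I recurse into the child whose subtree contains $s_v$. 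Termination is forced because each descent strictly deepens in $T$ while preserving $|R\cap P_v|\ge t\ge 2$. If instead no single piece holds $t$ points, every $t$-subset of $R\cap P$ spans several canonical pieces, and an analogous argument at $\mathrm{lca}_T(R\cap P)$---using that only one node of its depth can host such an LCA within the subtree rooted at $\mathrm{lca}_T(R\cap P)$---delivers the desired $s^\star$.

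Uniqueness of $\varphi(s^\star)$ then follows from two structural facts: any $s'\subseteq R\cap P$ whose depth coordinate matches that of $s^\star$ must have $\mathrm{lca}_T(s')=\mathrm{lca}_T(s^\star)$ (the only node of that depth in the subtree of $\mathrm{lca}_T(R\cap P)$), and the CF-uniqueness of $\psi_{\mathrm{lca}_T(s^\star)}$ forbids any other such $s'$. The main technical obstacle will be guaranteeing that the descent always terminates at a candidate that genuinely splits across some node's children---and handling the scattered case where no canonical piece individually holds $t$ points; both will require a careful argument about how $\psi$-uniqueness transfers along sibling subtrees of $T$ and, in the scattered case, possibly appealing to the strong CF machinery that underlies \cref{thm:linear-delaunay}.
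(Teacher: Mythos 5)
There is a genuine gap, and it sits exactly where you park the difficulty: the ``structural fact'' that any $t$-subset $s'\subseteq R\cap P$ whose lca has the same depth as $\mathrm{lca}_T(s^\star)$ must have the \emph{same} lca node is false. The subtree rooted at $\mathrm{lca}_T(R\cap P)$ contains many nodes at each depth, and $R\cap P$ is not contiguous in the $x$-order (the canonical decomposition itself scatters $R\cap P$ over up to $\Theta(\log n)$ canonical nodes on both sides of the tree), so two disjoint $t$-subsets of $R\cap P$ lying in different canonical subtrees can easily have lcas at equal depth. Their second coordinates are then computed by the unrelated colourings $\psi_v$ and $\psi_{v'}$ of two different nodes, and nothing prevents a collision of $\varphi$-values; fixing this by encoding the lca identity would cost $\Omega(n)$ colours. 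The deeper reason this cannot be patched locally is that the palettes of \cref{thm:linear-delaunay} at different nodes carry no globally comparable structure: this is precisely why the paper's proof works with a single \emph{unique-maximum}-type colouring of the points (built via \cref{alg:gen-CF-framework} from a $(t+1)$-colourful colouring with respect to the $O(\log n)$ aspect-ratio pseudo-disc classes), so that the sum of point colours gives a quantity comparable across the whole of $R\cap P$, with only a constant-size annotation $Q$ needed to break the ties caused by a rectangle being a union of two pseudo-discs. Your per-node subset colourings have no analogue of this cross-piece comparability.

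The second unresolved point is the ``scattered'' case, where no canonical slab contains $t$ points of $R$. There every $t$-subset of $R\cap P$ has its lca at a node $v$ lying \emph{above} the canonical nodes, where $\pi_x(P_v)\not\subseteq I_x$ and hence $R\cap P_v$ is not a $y$-interval of $P_v$; the interval hypergraph at $v$ simply does not contain the relevant hyperedge, so $\psi_v$ gives no guarantee at all, and the appeal to ``strong CF machinery'' is not an argument. Note also that this case is unavoidable (e.g.\ $R\cap P$ can consist of $t$ points in $t$ distinct canonical slabs), so it cannot be dismissed as degenerate. As it stands, the construction gives the right palette size $O(t^2\log^2 n)$ but not a valid $t$-subset-CF colouring; to salvage an lca/slab-based scheme you would need a mechanism, comparable across different same-depth nodes and across the scattered case, playing the role of the paper's global sum-of-colours with the auxiliary encoding $Q$.
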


Since such hypergraphs do not have linear Delaunay graphs, our proof of Theorem~\ref{thm:axis-parralel-rectangles} uses different approach and is more involved technically.

\subsection{General hypergraphs}
Contrary to the above results, in general there is no quantitative relation between $\CFPt$ and $\CF$.  For example, We show that there is no function $f$ such that $\CFPt(H) \leq f(\CF(H))$. 

\begin{theorem}
\label{thm:unbounded}
For any $t \geq 2$ there exists a sequence of hypergraphs $(H_i)_{i\in\setN}$ such that $\CF(H_i)=2$ and $\lim_{i\to + \infty}\CFPt(H_i) =  +\infty$.
\end{theorem}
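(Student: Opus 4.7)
The plan is to take $H_i$ to be a ``sunflower'' hypergraph whose core is a single vertex $a$, and to force failure of any $t$-subset CF-colouring by a two-step Ramsey argument. Concretely, let $V_i = \{a\} \cup B_i$ with $|B_i| = i$ and $a \notin B_i$, and declare the hyperedges of $H_i$ to be all sets of the form $\{a\} \cup S$ with $S \subseteq B_i$ and $|S| \geq t$.

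The bound $\CF(H_i) = 2$ is immediate for $i \geq t$: colouring $a$ in one colour and every vertex of $B_i$ in a second colour makes $a$ the uniquely coloured vertex of every hyperedge.

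For the lower bound on $\CFPt(H_i)$, fix $k \in \setN$ and suppose $\varphi \colon \binom{V_i}{t} \to [k]$ is a $t$-subset CF-colouring. The $t$-subsets of $V_i$ split naturally into two families: those entirely contained in $B_i$, coloured directly by $\varphi$, and those of the form $\{a\} \cup T$ with $T \in \binom{B_i}{t-1}$, which induce a secondary colouring $\psi(T) := \varphi(\{a\} \cup T)$ of $(t-1)$-subsets of $B_i$. The idea is to homogenise both $\varphi$ and $\psi$ on a common subset by applying hypergraph Ramsey twice. First, for $i$ sufficiently large in terms of $k$ and $t$, the $t$-uniform Ramsey theorem produces $B_0 \subseteq B_i$ on which $\varphi$ is constant, say equal to $c_0$; next, applying the $(t-1)$-uniform Ramsey theorem to $\psi|_{\binom{B_0}{t-1}}$ yields $B_1 \subseteq B_0$ with $|B_1| = t+1$ on which $\psi$ is constant, equal to some $c_1$.

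It then suffices to examine the hyperedge $h = \{a\} \cup B_1 \in \E_i$, which has size $t+2 > t$. Its $t$-subsets are exactly the $\binom{t+1}{t} = t+1$ members of $\binom{B_1}{t}$, all coloured $c_0$, together with the $\binom{t+1}{t-1} = \binom{t+1}{2}$ subsets of the form $\{a\} \cup T$, all coloured $c_1$. For $t \geq 2$ both multiplicities are at least $3$, so no $t$-subset of $h$ is uniquely coloured, contradicting the assumption on $\varphi$. Hence $\CFPt(H_i) > k$ whenever $i$ exceeds the iterated Ramsey number $R^{(t)}\bigl(R^{(t-1)}(t+1;k);k\bigr)$, so $\CFPt(H_i) \to + \infty$ as required. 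The one conceptual point is that a single Ramsey application would not suffice: one must homogenise \emph{both} colour families simultaneously so that only two colours appear among the $t$-subsets of some hyperedge, each with multiplicity at least $3$.
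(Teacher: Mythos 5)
Your proposal is correct and is essentially the paper's own argument: the same apex-plus-petals hypergraph (centre vertex joined to all large enough subsets of the rest), with the observation that $\CF=2$ via the apex, and the same two-step Ramsey homogenisation of both families of $t$-subsets (those containing the apex and those avoiding it) to produce a hyperedge of size $t+2$ in which every colour occurs at least twice. The only cosmetic differences are that the paper fixes the petal size to exactly $t+1$ and applies the $(t-1)$-uniform Ramsey theorem before the $t$-uniform one, neither of which changes the substance.
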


Notice also that there is no function $f$ such that $\CF(H) \leq f(\CFPt(H))$ already for $t=2$. One can take for example, the complete graph $K_n$ and easily verify that $\CFPt(K_n)=1$ and $\CF(K_n)=n$ for any $n$. We also show that $\CF(H)$ can be much larger than $\CFPt(H)$, even in non-trivial geometrically-defined hypergraphs. 

\begin{theorem}
\label{thm:unbounded2}
For any $n \in \mathbb{N}$, there is a hypergraph $H=(V,\E)$ with $|V|=n$ such that $\CFP(H)=O(\log n)$ and $\CF(H)=n$. 
\end{theorem}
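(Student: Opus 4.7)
The plan is to exhibit a geometrically defined hypergraph realising the claim. Let $P \subset \setR^2$ be any set of $n$ points in general position and let $\mathcal{D}$ denote the family of all closed discs in the plane. I propose to take
\[
H \;=\; \bigl(H(P,\mathcal{D})\bigr)^\cup,
\]
whose hyperedges are precisely the sets $P \cap (d_1\cup d_2)$ for $d_1, d_2\in\mathcal{D}$. This is a natural geometric hypergraph on $n$ vertices.

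For the upper bound, I would appeal directly to Theorem~\ref{thm:union}. Because the Delaunay graph of any finite planar point set is planar, $H(P,\mathcal{D})$ has the HLD property with constant $c=3$; this is already noted in the paper. Theorem~\ref{thm:union} then yields $\CFP(H) = O(\log n)$ with an absolute implied constant, which gives the first half of the statement.

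For the lower bound $\CF(H) \geq n$, the key step is the geometric observation that \emph{every} pair $\{u,v\} \subseteq P$ is realised as a hyperedge of $H$. To see this, let $\delta>0$ be smaller than half the minimum pairwise distance in $P$ and take closed discs $d_u, d_v$ of radius $\delta$ centred at $u$ and $v$. Then $P \cap (d_u \cup d_v) = \{u,v\}$, so $\{u,v\}$ is a hyperedge. Consequently, in any CF-colouring $\varphi$ of $H$, each such 2-element hyperedge $\{u,v\}$ forces $\varphi(u)\neq \varphi(v)$, so $\varphi$ is injective on $V$ and uses at least $n$ colours. The matching upper bound $\CF(H) \leq n$ is trivial, so $\CF(H)=n$.

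The argument is very short once Theorem~\ref{thm:union} is available, and I do not anticipate a serious obstacle. The only points requiring care are verifying that $H(P,\mathcal{D})$ really does have the HLD property (which follows from planarity of the Delaunay graph of every subset of $P$) and the elementary small-disc construction that realises every pair as a hyperedge, both of which are immediate.
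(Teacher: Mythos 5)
Your proposal is correct and follows essentially the same strategy as the paper: take the union hypergraph $H^{\cup}$ of an HLD hypergraph, get $\CFP(H^{\cup})=O(\log n)$ from the paper's union machinery, and observe that every pair of vertices is a hyperedge of $H^{\cup}$ (union of two singleton hyperedges), which forces any CF-colouring to be injective, so $\CF(H^{\cup})=n$. The only difference is the instantiation: the paper uses points on a line with interval unions, proved directly with an explicit $4\log n$ colouring in \cref{thm:union_intervals} and realised geometrically via halfspaces in $\setR^4$, whereas you use disc unions in the plane and invoke the general \cref{thm:union} with $c=3$; both are valid, with the paper's choice giving a cleaner explicit constant and your choice requiring nothing beyond \cref{thm:union} and the planarity of the Delaunay graph.
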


The phenomenon witnessed by Theorem~\ref{thm:unbounded2} can be realised by geometric intersection hypergraphs defined with respect to points and halfspaces in $\setR^4$ (\cref{subsec:interval_union}). 

\subsection{Organization of the paper}
In Section~\ref{sec:prelim} we present several additional definitions and facts that we use throughout the paper. In Section~\ref{sec:rectangles} we prove Theorem~\ref{thm:axis-parralel-rectangles} 

In Section~\ref{sec:sparse_del} we prove Theorem~\ref{thm:linear-delaunay}.
Finally, in Section~\ref{sec:negative-results} we prove Theorem~\ref{thm:unbounded} and Theorem~\ref{thm:unbounded2} and provide geometric realisations of the underlying hypergraphs. In that section we also prove Theorem~\ref{thm:union}.

\section{Preliminaries}
\label{sec:prelim}
Fix a hypergraph $H=(V,\E)$.
We need several more definitions of various hypergraph colurings.
We start with the definition of $t$-strong-CF-colouring extending that of CF-colouring, where we require at least $t$ uniquely coloured vertices in any hyperedge:
\begin{definition}[\cite{hks09}]
	Let $t \in \Positives$. A colouring of $H$ is a \emph{$t$-strong-CF-colouring} if each $h \in \E$ contains at least $\min \{|h|,t\}$ vertices whose colour is unique in $h$. Let $\chi_{t\text{-strong-CF}}(H)$ be the least number of colours required in a $t$-strong-CF-colouring of $H$.  
\end{definition}

Next, consider yet another extension of CF-colouring:
\begin{definition}[\cite{cheilarisUniqueMaximumConflictFreeColoring2013,CF-survey}]
	A \emph{unique-maximum colouring} of $H$ (UM-colouring for short) is a colouring of $V$ with an ordered set of colours (e.g.,\@ integers) in which the maximum colour in each hyperedge is also unique in the hyperedge. Let $\chium(H)$ be the least number of colours needed in a UM-colouring of $H$.
\end{definition}
 
Note that any UM-colouring of $H$ is also a CF-colouring.

Similarly to $t$-strong CF-colouring, we extend UM-colouring and define $t$-unique-maximum colourings:

\begin{definition}
	Let $t \in \Positives$. A colouring with an ordered set of colours is a \emph{$t$-UM-colouring} of $V$ if in any $h \in \E$, the $\min \{  |h|,t  \}$ largest colours are unique. Let $\chitum(H)$ be the least number of colours required in a $t$-UM-colouring of $H$. 
\end{definition}

The following definition is yet another extension of the standard notion of proper colouring of a hypergraph:

\begin{definition} [\cite{hks09}]
	Let $t\in \setN$. A colouring is $t$-\emph{colourful} if any hyperedge $h\in \E$ contains at least $\min \{|h|,t \}$ pairwise distinctly-coloured vertices. Let $\chi_{t\text{-colourful}}(H)$ be the least number of colours required in a $t$-colourful colouring. 
\end{definition}

Observe that for all $t \in \setN$ we have $\chitum \geq \chi_{t\text{-strong-CF}} \geq \chi_{t\text{-colourful}}$, since $t$-UM colourings are also $t$-strong-CF and $t$-strong-CF colourings are also $t$-colourful.

\subsection{Colouring meta-algorithm}

\begin{algorithm}[H]
	\caption{A general hypergraph colouring scheme}
	 \hspace*{\algorithmicindent} \textbf{Input:} 
	 $H=(V,\E)$, hypergraph colouring subroutine $\mathbf{aux}$\\ 
	 \hspace*{\algorithmicindent} \textbf{Output:} a colouring $\phi$ of $H$
	\label{alg:gen-CF-framework}
	\begin{algorithmic}[1] 
		\State $i\gets 1$ \Comment{$i$ denotes an unused colour}
		\While{$V\neq \emptyset$}
		\State $\phi \gets \mathbf{aux}(H[V])$ \Comment{auxiliary colouring}
		\State {$V' \gets$ largest colour class of $\phi$ (arbitrarily chosen in case of a tie)}
		\State colour all vertices of $V'$ with $i$
		\State $V\gets V\setminus V'$
		\State $i\gets i+1$
		\EndWhile
	\end{algorithmic}
\end{algorithm}

\paragraph*{Analysis of \cref{alg:gen-CF-framework}}
If the subroutine $\mathbf{aux}$ uses at most $f(n)$ colours on $n$-vertex hypergraphs, then the number of vertices remaining to be coloured after $i$ iterations is at most $u_i$, where $u_{i+1} = u_i \cdot (1 - 1 / f(u_i))$ and $u_0=n$ is the number of vertices of $H$. The number of iterations in \Cref{alg:gen-CF-framework}, which also equals the number of colours in its output, is at most $\min \{T\in \setN\colon u_T<1\}$.
For example, if $f(n)=O(1)$ then $T=O(\log u_0)$, if $f(n) = O(\log^{\alpha} n)$ for some positive $\alpha$ then $T = O(\log^{\alpha+1} u_0)$ and if $f(n)=O(n^\alpha)$ then $T = O(u_0^\alpha)$.

\Cref{alg:gen-CF-framework} was used in \cite{smoro} for finding a UM-colouring of a hypergraph, by using the subroutine \textbf{aux} to be a proper colouring. It was then proved that UM-colourings and proper colourings are strongly related as is summarised in the following theorem:

\begin{theorem}[\cite{smoro}]\label{thm:Proper-to-CF}
    For any finite hypergraph $H=(V,\E)$,
    \[\chium(H)
	= O\left( \max_{V'\subset V} \chi(H[V']) \cdot \log |V|\right).\]
\end{theorem}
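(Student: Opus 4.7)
The plan is to apply \cref{alg:gen-CF-framework} with the subroutine $\mathbf{aux}$ instantiated as any proper-colouring procedure. Writing $k = \max_{V'\subset V} \chi(H[V'])$, the auxiliary colouring of the currently surviving vertex set uses at most $k$ colours, so its largest colour class contains at least a $1/k$ fraction of the surviving vertices. The sequence $u_i$ from the analysis following the algorithm then satisfies $u_{i+1}\le u_i(1-1/k)$, which gives $T = O(k\log|V|)$ iterations, matching the claimed bound on the number of colours used by the algorithm's output.

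The substantive step is to verify that the output $\phi$ is actually a UM-colouring. Enumerate the iterations $1,2,\ldots,T$, let $V_i$ be the vertex set surviving at the start of iteration $i$ (so $V_1=V$), and let $V'_i\subseteq V_i$ be the colour class chosen at iteration $i$, which receives the new colour $i$. Fix a hyperedge $h\in \E$ with $|h|\ge 2$ and let $I_h$ be the largest index with $h\cap V'_{I_h}\neq\emptyset$; thus $I_h$ is the maximum colour appearing on $h$. By maximality of $I_h$, every vertex of $h$ still uncoloured at the start of iteration $I_h$ must receive colour $I_h$, so $h\cap V_{I_h}\subseteq V'_{I_h}$.

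Now $h\cap V_{I_h}$ is, by definition of induced sub-hypergraph, a hyperedge of $H[V_{I_h}]$, and $V'_{I_h}$ is one colour class in a proper colouring of $H[V_{I_h}]$. If $|h\cap V_{I_h}|\ge 2$, then the inclusion $h\cap V_{I_h}\subseteq V'_{I_h}$ would make this hyperedge monochromatic, contradicting properness of $\mathbf{aux}$. Hence $|h\cap V_{I_h}|=1$, i.e.\ exactly one vertex of $h$ carries the maximum colour $I_h$, which is the UM condition. Singletons are handled trivially. The main conceptual point — and the only place where the hereditary nature of the proper-colouring parameter enters — is this last observation: at the last iteration touching $h$, the whole remaining trace of $h$ falls inside a single auxiliary colour class, so properness of $\mathbf{aux}$ on the induced sub-hypergraph $H[V_{I_h}]$ forces that trace to be a singleton. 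I do not anticipate any serious obstacle beyond setting up this indexing carefully.
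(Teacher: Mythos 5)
Your proposal is correct and is essentially the paper's intended argument: the paper (following \cite{smoro}) obtains this exact bound by running \cref{alg:gen-CF-framework} with a proper colouring as the subroutine $\mathbf{aux}$, with the same $u_{i+1}\le u_i(1-1/k)$ count of iterations. Your verification that the last iteration touching a hyperedge leaves only a single vertex of it (via properness of the auxiliary colouring on the induced sub-hypergraph) is precisely why the output is unique-maximum, so nothing is missing.
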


In this paper we use the same meta-algorithm with other choices of auxiliary colourings that will be useful for our purposes.

\subsection{Linear Delaunay graphs}
We need one more technical lemma about hypergraphs with the HLD property. It was explicitly stated in \cite{AKP21}, based on earlier ideas from \cite[Lem.~2.6]{ADEP21} and \cite{BPR13}, and can be viewed as an abstract version of the Clarkson--Shor technique \cite{cs-arscg-89}:

\begin{lemma} (\cite[Lem.~22]{AKP21}) \label{lem:k_good_pairs}
	Let $H=(V,\E)$ be a hypergraph with the HLD property for some parameter $c\in \setR_{>0}$. Then, for any $k\in\setN$,
	\[ \left\lvert \left\{ \{v_1,v_2\} \in \binom{V}{2} \colon \exists h \in\E,\ |h| \leq k \land \{v_1,v_2\} \subset h \right\} \right\rvert \leq c |V|ek,
	\]
where $e$ is the base of the natural logarithm.
\end{lemma}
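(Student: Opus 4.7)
The plan is to apply a Clarkson--Shor style random sampling argument. I would sample $V' \subseteq V$ by including each vertex independently with probability $p := 1/k$, and then estimate the expected number of edges of $\Del(H[V'])$ in two opposite directions.

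An upper bound is immediate from the HLD hypothesis: since $|E(\Del(H[V']))| \leq c|V'|$ deterministically for every realisation of $V'$, linearity of expectation gives that this expectation is at most $c|V|p = c|V|/k$.

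For a matching lower bound I would fix any pair $\{v_1,v_2\}$ counted on the left-hand side of the inequality, together with a witness hyperedge $h \in \E$ of size at most $k$ containing both vertices. On the event that $v_1,v_2 \in V'$ while no other vertex of $h$ lies in $V'$, the intersection $h \cap V'$ equals $\{v_1,v_2\}$, so the pair becomes a size-$2$ hyperedge of $H[V']$ and hence an edge of $\Del(H[V'])$. This event has probability $p^2(1-p)^{|h|-2}$, which is at least $(1/k^2)(1-1/k)^{k-2} \geq 1/(ek^2)$ for all $k \geq 2$. Summing the contributions of all such pairs by linearity of expectation and comparing with the upper bound then yields the claimed inequality; the case $k=1$ is vacuous since a one-vertex hyperedge contains no pair.

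I do not anticipate a real obstacle here, as this is essentially the standard abstract Clarkson--Shor scheme. The only genuine choice is the sampling rate $p=1/k$, tuned precisely so that each witness hyperedge of size at most $k$ survives the sampling with constant probability at least $1/e$ while the expected sample size remains $|V|/k$; any other scaling of $p$ would weaken the resulting bound by a factor depending on $k$.
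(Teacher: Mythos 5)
Your argument is correct: the sampling rate $p=1/k$, the deterministic use of the HLD bound on the random induced subhypergraph, and the lower bound $p^2(1-p)^{|h|-2}\geq 1/(ek^2)$ (valid since $(1-1/k)^{k-1}\geq 1/e$ for $k\geq 2$) combine to give exactly $c|V|ek$. The paper itself only cites this lemma from [AKP21] as an abstract Clarkson--Shor statement, and your proof is precisely that standard Clarkson--Shor sampling argument, so it matches the intended route.
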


\section{Colouring subsets of points with respect to axis-parallel rectangles}\label{sec:rectangles}

In this section we prove \cref{thm:axis-parralel-rectangles}.

\begin{proof}[Proof of \cref{thm:axis-parralel-rectangles}]
Let $n$, $t$, $P$ and ${\cal R}$ be as in the statement of the theorem. We consider the intersection hypergraph $H=H(P,{\cal R})$. Note that since $P$ is finite, we can assume that the set ${\cal R}$ is finite without changing the set of hyperedges of $H$ and while maintaining the properties mentioned below. 

	We can assume, without loss of generality, that the points of $P$ lie on the $n\times n$ integer grid and furthermore that no two points in $P$ share an $x$- or $y$-coordinate. A small perturbation of $P$ and monotone transformations of the coordinates ensure both, without removing any hyperedge from $H(P,\R)$.
 
	Put $I= \{-\lceil \log n\rceil, \dots, \lceil \log n\rceil\}$. For each $i\in I$, let $A_i$ be the set of all axis-parallel rectangles with width-to-height ratio $2^i$. Each $A_i$ is a family of pseudo-discs\footnote{After an infinitesimal perturbation of each $r \in A_i$ which does not affect $r\cap P$.} (see \cite{ACPIN2013}), and the following properties hold:
	\begin{enumerate}
	    \item[(a)] For every $r \in \R$, there exist $i \in I$ and $r_1, r_2 \in A_i$ such that $r =r_1 \cup r_2$. \label{prop-cover}
	    Indeed, if $r$ has width $w$ and height $h\leq w$ then it can be covered by two rectangles of height $h$ and width $2^{\lfloor \log (w/h)\rfloor} h$ which are contained in $r$ (see \cref{fig:propA}). The case $w< h$ is symmetric.
    \item[(b)] \label{prop-access}For every $r\in A_i$ such that $|r\cap P|\ge t+1$, there exists $d\in A_i$ such that $d \subset r$ and $|d\cap P|=t+1$. Such a rectangle $d$ can be obtained from $r$ by a translation and scaling, while maintaining its width-to-height ratio.
    \end{enumerate}
    
\begin{figure}
\begin{center}
\includegraphics[width=0.4\textwidth]{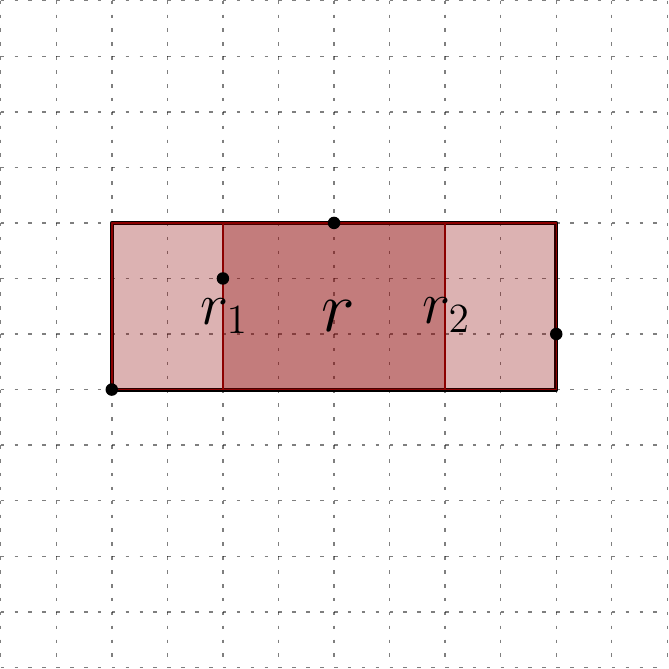}
\end{center}
\caption{A rectangle $r$ covered by $r_1$ and $r_2$. The rectangle $r$ is of height 3 and width 8, and each $r_i$ is of height 3 and width 6.}\label{fig:propA}
\end{figure}
	
	For $i\in I$, let 
	\[ E_i = \left\{ \{p,q\}\in \binom {d \cap P} 2 \colon d\in A_i, |d\cap P|\le t+1\right\}.\]
	\Cref{lem:k_good_pairs} together with \cref{thm:Kesegh} applied to $H(P, A_i)$ give $|E_i|\leq 3e \cdot (t+1) |P|$.
	Let $G:=(P,\cup_{i\in I} E_i)$. First, we prove by induction on $|P|=n$ that $\chi(G) \leq 80t\log n +1$. The graph $G$ has at most $3e\cdot (t+1)|P|\cdot |I| \leq 40tn\log n$ edges, hence $G$ has a vertex $p \in P$ of degree at most $80t \log n$. Let $P'= P \setminus \{p\}$, then by the induction hypothesis, the graph $G'$ defined like $G$ on $P'$, satisfies $\chi(G') \leq 80t \log n +1$. To argue by induction that $\chi(G) \leq 80 t \log n +1$ we need to verify that if $\{x,y\}\in E(G)$ and $x,y \neq p$, then $\{x,y\}\in E(G')$. Indeed, if $\{x,y\}\in E(G)$ then there is some $d\in A_i$ for some $i\in I$ such that $|d\cap P|\le t+1$ and $\{x,y\}\in \binom {d \cap P} 2$, then $|d\cap P'|\le |d\cap P|\le t+1$ and $\{x,y\}\in \binom {d \cap P'} 2$, hence $\{x,y\}\in E(G')$, as required.  
	Now, by colouring inductively $G'$ with at most $80 t \log n +1$ colours, and assigning $p$ a colour that differs from the colours of all its neighbours in $G$, we are done.

	The proper colourings of $G$ are, by Property ($b$), exactly the $(t+1)$-colourful colourings of $H(P,\cup_{i\in I} A_i)$. 
	
    Similarly we obtain a $(t+1)$-colourful colouring $\phi^{P'}$ in $H(P',\cup_{i\in I} A_i)$ for any $P'\subset P$ with $O(t \log |P'|)$ colours.
	
	Applying \cref{alg:gen-CF-framework} with $\phi^{P'}$ as the auxiliary colouring, we obtain a UM-colouring $c$ of $H(P,\cup A_i)$ with $O(t \log^2 n)$ colours. 
	
	\begin{claim}\label{claim:colourCount}
	Let $r\in \mathcal{R}$ and $P'$ be the set of points remaining after some iterations of \cref{alg:gen-CF-framework}. If $|r\cap P'| \leq t+2$, then each colour of $\phi^{P'}$ appears at most twice in $r \cap P'$, and if $|r \cap P'|= t+k$ (for $k \geq 3$), then each colour of $\phi^{P'}$ appears at most $k$ times in $r \cap P'$.
	\end{claim}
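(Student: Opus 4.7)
My plan to prove \cref{claim:colourCount} is to invoke Property (a) to write $r = r_1 \cup r_2$ with $r_1, r_2 \in A_i$ for some $i \in I$, and then to combine the $(t+1)$-colourful bounds on $r_1 \cap P'$ and $r_2 \cap P'$ via inclusion--exclusion. First I would unpack what $(t+1)$-colourful yields on a hyperedge $d \cap P'$ with $d \in A_i$: if $|d \cap P'| \leq t+1$, all colours of $d \cap P'$ are distinct, while for $|d \cap P'| \geq t+1$ the property extracts $t+1$ pairwise distinctly-coloured points, so any single colour appears at most $|d \cap P'| - t$ times.

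Set $m = |r \cap P'|$, $m_j = |r_j \cap P'|$, $m_{12} = |r_1 \cap r_2 \cap P'|$, and let $\ell, \ell_j, \ell_{12}$ denote the analogous counts for a fixed colour $c$, with $k := m - t$. Then $\ell_j \leq \max(1, m_j - t)$, together with the identities $m = m_1 + m_2 - m_{12}$ and $\ell = \ell_1 + \ell_2 - \ell_{12}$. I would try to unify both regimes of the claim by deriving a contradiction from the single hypothesis $\ell \geq \max(3, k + 1)$.

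In the main case, when $m_1, m_2 \geq t+1$, summing the two bounds gives $\ell + \ell_{12} = \ell_1 + \ell_2 \leq m_1 + m_2 - 2t = m + m_{12} - 2t$, that is, $\ell \leq (m - t) + (m_{12} - \ell_{12} - t)$. The key observation is that $m_{12} - \ell_{12}$ counts non-$c$ points of $r_1 \cap r_2$, which is a subset of the $m - \ell$ non-$c$ points of $r$, so $m_{12} - \ell_{12} \leq m - \ell \leq t - 1$ under the contradiction hypothesis. This yields $\ell \leq k - 1$, contradicting $\ell \geq k + 1$. When instead some $m_j \leq t$ (say $m_1$), then $\ell_1 \leq 1$, and the argument concludes with a short split on whether the (at most one) colour-$c$ point of $r_1 \cap P'$ lies inside $r_2$; in the ``outside'' sub-case that point belongs to $r \setminus r_2$, forcing $m_2 \leq m - 1$ and hence $\ell \leq 1 + (m_2 - t) \leq k$.

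The main obstacle I anticipate is that $r_1 \cap r_2$ is in general \emph{not} a hyperedge of $H(P', \cup_{i \in I} A_i)$: its width-to-height ratio falls strictly between two consecutive powers of two, so the $(t+1)$-colourful bound cannot be invoked on the overlap directly, and any attempt to recover $r_1 \cap r_2$ by re-applying Property (a) to it only postpones the same issue. The bound $m_{12} - \ell_{12} \leq m - \ell$ sidesteps this using nothing but the trivial inclusion $r_1 \cap r_2 \subseteq r$, and appears to be exactly the ingredient needed to make the computation tight enough to hit the $\max(2, k)$ target of the claim.
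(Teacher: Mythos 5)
Your proof is correct, and it follows the same skeleton as the paper's argument for \cref{claim:colourCount}: decompose $r=r_1\cup r_2$ with $r_1,r_2\in A_i$ via Property (a) and apply the $(t+1)$-colourful bound $\ell_j\le\max(1,|r_j\cap P'|-t)$ to each hyperedge $r_j\cap P'$. The difference is in how the two bounds are combined. The paper only spells out the regime $|r\cap P'|\le t+2$ (either some $r_j\cap P'$ equals $r\cap P'$, or both are proper subsets of size at most $t+1$ and hence have all colours distinct) and disposes of the regime $|r\cap P'|=t+k$, $k\ge 3$, with ``the second part is similar''; the literal analogue of that argument only yields $2(k-1)$ repetitions, so your extra accounting is doing real work rather than duplicating the paper. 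Your key step --- bounding the overlap by $m_{12}-\ell_{12}\le m-\ell$, i.e.\ using only $r_1\cap r_2\subseteq r$ instead of treating $r_1\cap r_2$ as a hyperedge (which, as you rightly note, it need not be) --- is sound, and in the main case it even gives the bound outright without the contradiction hypothesis: $\ell\le(m-t)+(m-\ell)-t$ rearranges to $\ell\le m-t=k$. For what it's worth, there is a still shorter route to the same conclusion: if some $|r_j\cap P'|\ge t+1$, then among the $t+1$ pairwise distinctly coloured points of $r_j\cap P'$ at most one has the colour $c$ under consideration, so $r\cap P'$ contains at least $t$ points of other colours and $\ell\le|r\cap P'|-t$; if both $|r_j\cap P'|\le t$, then trivially $\ell\le 2$; together this is exactly $\ell\le\max(2,|r\cap P'|-t)$, which is the claim. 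Your write-up, with the two sub-cases of the ``some $m_j\le t$'' branch filled in as you indicate, is a complete proof.
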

	
	\begin{proof}
	Let $i\in I$ and $r_1,r_2 \in A_i$ be such that $r=r_1\cup r_2$, as guaranteed by Property (a). Since the colouring $\phi^{P'}$ is $(t+1)$-colourful, each $r_i\cap P'$ ($i\in\{1,2\}$) receives at least $\min\{|r_i \cap P'|,t+1\}$ different colours. 

	If either of $r_1\cap P'$ or $r_2\cap P'$ is equal to $r\cap P'$ the conclusion follows trivially. Otherwise, $|r_1\cap P'|, |r_2\cap P'|\leq t+1$, so all colours appear at most once in $r_1$ and at most once in $r_2$. The second part is similar.

	\end{proof}
    
    \paragraph*{Deriving the final colouring for $\binom P t$}
    
    Recall the colouring $c$ of $P$ defined before \Cref{claim:colourCount}. We now define the colouring $\psi$ of $\binom P t$ as follows. For any $t$-subset $S \subset P$, if under $c$ some colour appears at least three times in $S$ then $\psi (S)=\bot$ (a dummy colour). Otherwise,
    denote by $r(S)$ the minimal axis-parallel rectangle such that $S\subseteq r(S)$. For a point $p\in P$ denote by $x(p)$ the $x$-coordinate of $p$.
    Also let $m = \min c(S)$, the least colour assigned to $S$ under $c$. 

    Define 
    \begin{equation} \label{eq:2t}
    \psi(S) = \left( \sum_{x \in S} c(x),Q(S)\right),
    \end{equation}
    where the value $Q(S)$ encodes the following information about $S$:\footnote{We write ``$m$ appears in $S$'', whenever there exists a point in $S$ which colour is $m$.}
    \begin{itemize}
        \item A. Whether $m$ appears exactly once, or exactly twice in $S$,
        \item B. Whether $m$ appears exactly once, exactly twice, or more than twice in $r(S)$,
        \item C. In the event where $m$ appears only once in $S$, its location in $r(S)$: on one of four open edges, at one of four corners, or in the interior of $r(S)$,
        \item D. In the event where $m$ appears only once in $S$ but exactly twice in $r(S)$, that is $p\in S$, $p'\in (r(S)\cap P)\setminus S$, and $c(p)=c(p')=m$, whether $x(p) < x(p')$.
    \end{itemize}
    While the precise encoding scheme is immaterial, $Q$ can be chosen to take a constant number of values. 
    Therefore, $\psi$ uses $O(t^2 \log^2 n)$ colours.

    \paragraph*{Correctness}
     Let us now check that $\psi$ is conflict-free, that is, that any rectangle $r\in {\cal R}$ such that $|r\cap P|\ge t+1$ contains a $t$-subset whose colour under $\psi$ is unique.
     
    Let $r\in \mathcal{R}$ and for every positive integer $\ell$ let $C_{\ell}$ be the set of points in $r\cap P$ receiving the $\ell$-th largest colour, in $r\cap P$, under $c$. Let $C_{\leq\ell}=\cup_{k\leq\ell} C_{k}$: the set of all points in $r \cap P$ which receive one of the $\ell$ largest colours, in $r\cap P$, under $c$. \Cref{claim:colourCount} can be restated as: if $|C_{\leq\ell}|\leq t + 2$ then $|C_1|, \dots, |C_\ell|\leq 2$, and if $|C_{\leq\ell}|= t + k$ for $k\geq 3$, then $|C_1|, \dots, |C_\ell|\leq k$.
        
    First, we consider the case where there is an index $\ell$ for which $|C_{\leq\ell}|=t$. In this case $C_{\leq\ell}$ is a $t$-subset with a sum of colours larger than the sum of any other $t$-subset of $r \cap P$ and hence uniquely coloured, by the first coordinate of \eqref{eq:2t}. 
    
    Next, we consider the complementary case where there is no index $\ell$ such that $|C_{\leq\ell}|=t$, then by \cref{claim:colourCount} there must be an index $\ell$ such that $|C_{\leq\ell}|=t+1$ and $|C_{\ell}|=2$. (Actually, \cref{claim:colourCount} implies that every $C_i$ for $1\leq i \leq \ell$ contains either 1 or 2 points.) Let $p\neq p'$ be the two points getting the smallest colours in $C_{\leq \ell}$ under $c$, namely, $m=c(p)=c(p')=\min\{c(C_{\leq \ell})\}$. We further divide cases based on how many points out of $\{p,p'\}$ appear on the boundary of $r(C_{\leq\ell})$. We argue that in all but one of the cases below the sets $S=C_{\leq\ell}\setminus \{p\}$ and $S'=C_{\leq\ell}\setminus \{p'\}$ have $Q(S)\neq Q(S')$. As those subsets have the maximal sum out of all $t$-subsets in $r\cap P$ (this sum is the first coordinate of \eqref{eq:2t}), they are uniquely coloured out of all the $t$-subsets in $r\cap P$, i.e. we get two uniquely coloured subsets instead of the required one. Note that $m=c(p)=c(p')$ appears only once in each of $S$ and $S'$. 
    \begin{itemize}
        \item 1. Neither $p$ nor $p'$ lie on the boundary of $r(C_{\leq\ell})$. Then $m$ appears twice in $r(S)=r(S')=r(C_{\leq\ell})$, so item (D) in the definition of $Q$ guarantees $Q(S)\neq Q(S')$.
        \item 2. Exactly one of $p$ and $p'$ lies on the boundary of $r(C_{\leq\ell})$, say $p$. Then $m$ appears twice in $r(S')$ but only once in $r(S)$, so $Q(S)\neq Q(S')$ (item (B)).
        \item 3. Both $p$ and $p'$ lie on the boundary of $r(C_{\leq\ell})$, on different edges as no two points have the same $x$- or $y$-coordinates. As long as $p$ and $p'$ do not appear on the same relative corner of $r(S')$ and $r(S)$, that is, not both $p$ and $p'$ appear at the top-right corner of $r(S')$ and $r(S)$, respectively, (and the same holds for the top-left, bottom-right and bottom-left corners), item (C) ensures $Q(S)\neq Q(S')$. Otherwise, 
        let $r=r(C_{\leq\ell})$ and consider the iteration of \cref{alg:gen-CF-framework} in which $r \cap P'=C_{\leq\ell}$. 
        In this iteration, $p$ and $p'$ were assigned the same colour under $c$, hence the same colour in the auxiliary colouring of the algorithm. 
        This means that in the covering $r=r_1\cup r_2$ obtained by property (a), without loss of generality, $p \in r_1 \setminus r_2$ and $p' \in r_2 \setminus r_1$. Since $p $ and $p'$ are in the same relative corner of $r(S')$ and $r(S)$, we have  $S'= C_{\leq\ell} \cap r_1$ or $S= C_{\leq\ell} \cap r_2$. Hence all points in $C_{\leq\ell} \setminus \{ p,p' \}$ get distinct colours under $c$.
        In this case, the set $S''$ which is $C_{\leq\ell}$ minus its point coloured with the smallest colour larger than $c(p)=c(p')$ is uniquely coloured under $\psi$. It has the maximal sum out of the $t$-subsets with the minimum appearing exactly twice (item (A)). 
    \end{itemize}
        See \cref{fig:rect} for an illustration of the different cases where $t=5$ and $|C_{\le \ell}|=6$. In each of the cases, the drawn rectangle is $r(C_{\le \ell})$. Note that the drawn rectangles can contain more points of $(P\cap r(C_{\le \ell}))\setminus C_{\le \ell}$ which are not depicted in the figure. 
        In \cref{fig:case3a}, the points $p$ and $p'$ are not on the same relative corner of $r(S)$ and $r(S')$. In \cref{fig:case3b}, the points $p$ and $p'$ are on the same relative corner of $r(S)$ and $r(S')$. Let $r_1,r_2$ be the rectangles which cover $r$ by \cref{prop-cover}. The points $p$ and $p'$ do not belong to the same $r_i$. Therefore all the other four points in $C_{\le \ell}$ belong to the same rectangle, which is $r_2$, so those points get distinct colours under $c$. In this case the uniquely coloured five-tuple $S''$ consists of $p,p'$ and the three maximal colours among the remaining four. 
\begin{figure}
\begin{subfigure}{.5\textwidth}
  \centering
  \includegraphics[width=.75 \linewidth]{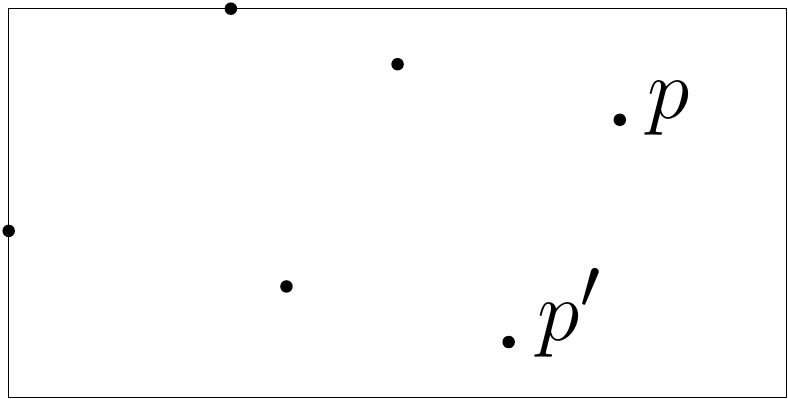}
  \caption{Case 1}
  \label{fig:case1}
\end{subfigure}%
\begin{subfigure}{.5\textwidth}
  \centering
  \includegraphics[width=.8\linewidth]{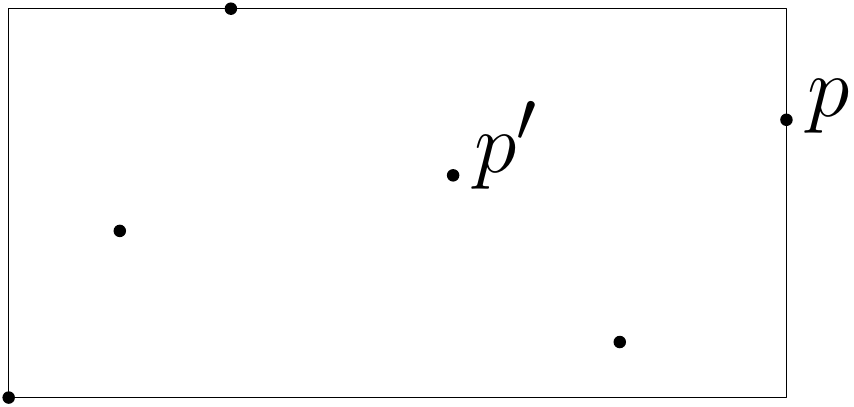}
  \caption{Case 2}
  \label{fig:case2}
\end{subfigure}
\newline
\begin{subfigure}{.5\textwidth}
  \centering
  \includegraphics[width=.8\linewidth]{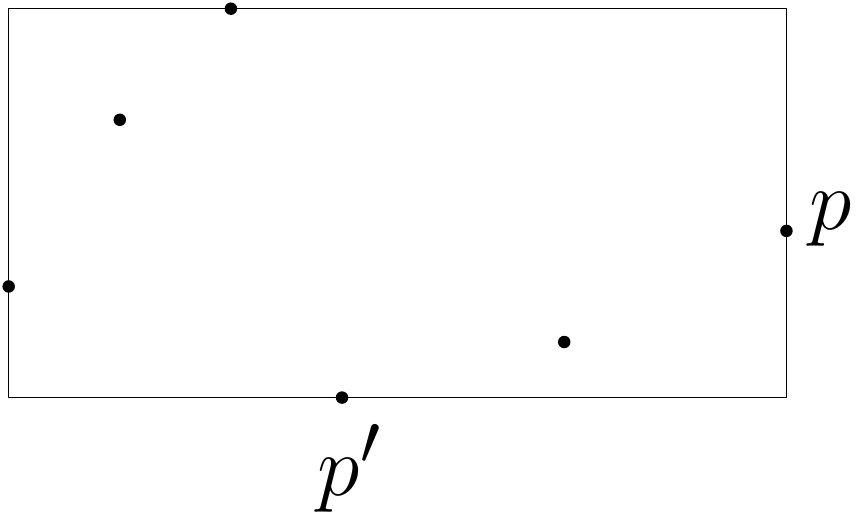}
  \caption{First possibility of case 3}
  \label{fig:case3a}
\end{subfigure}%
\begin{subfigure}{.5\textwidth}
  \centering
  \includegraphics[width=.8\linewidth]{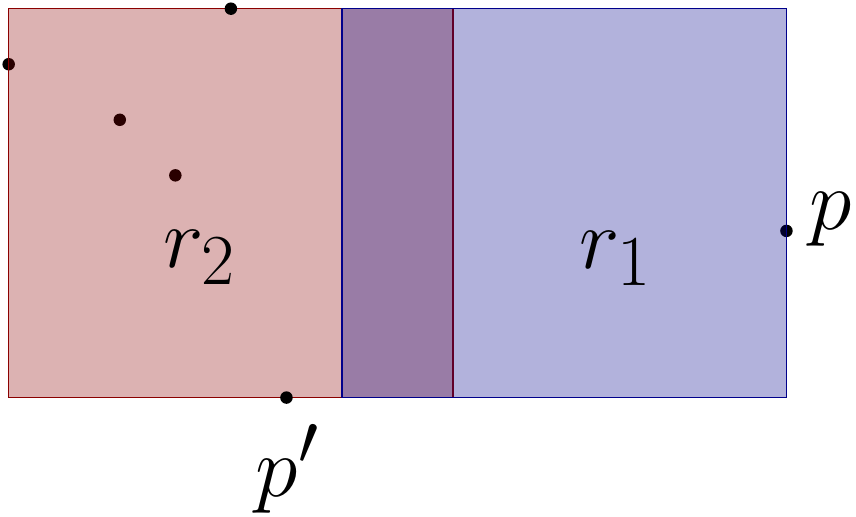}
  \caption{Second possibility of case 3}
  \label{fig:case3b}
\end{subfigure}
\caption{The different cases in the proof of \cref{thm:axis-parralel-rectangles}. In the first possibility of Case 3, $p$ and $p'$ are not on the same relative corner of $r(S)$ and $r(S')$. In the second possibility of Case 3, both $p$ and $p'$ are on the same relative corner (bottom right) of $r(S)$ and $r(S')$. Here in the covering $r=r_1 \cup r_2$, the points $p$ and $p'$ do not belong to the same $r_i$. Therefore, all other 4 points belong to the same $r_i$ (which is $r_2$ in the figure), hence they get distinct colours under $c$. In this case, the uniquely-coloured 5-tuple, $S''$, consists of $p,p'$ and 3 out of the 4 other points, removing the fourth point whose colour under $c$ is the minimal among the four.}
\label{fig:rect}
\end{figure}
\end{proof}

\section{Hypergraphs with linear Delaunay graphs}
\label{sec:sparse_del}
\subsection{General combinatorial considerations}

Horev et al. \cite{hks09} showed that using a $(t+1)$-colourful colouring subroutine in \cref{alg:gen-CF-framework} yields a valid $t$-strong-CF-colouring. However, a more careful analysis of this algorithm reveals that the colouring is, in fact, a $t$-UM-colouring. Hence, we have:
\begin{claim}\label{cl:colourful2um}
Let $t,k\in \setN$ and let $H=(V,\E)$ with $|V|=n$ where for any $V' \subset V$, $\chi_{(t+1)\text{-colourful}}(H[V']) \leq k$. Then $\chitum(H)= O( k \log n)$.
\end{claim}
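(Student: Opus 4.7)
The plan is to apply Algorithm~\ref{alg:gen-CF-framework} with the auxiliary subroutine $\mathbf{aux}$ being a $(t+1)$-colourful colouring of the currently remaining induced sub-hypergraph, and then argue that the resulting colouring is not merely $t$-strong-CF (as was already observed in \cite{hks09}) but in fact $t$-UM.

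The colour-count bound $O(k \log n)$ is immediate from the generic analysis of Algorithm~\ref{alg:gen-CF-framework} recorded in Section~\ref{sec:prelim}. By the hereditary hypothesis, the auxiliary subroutine uses at most $k$ colours on every induced sub-hypergraph, so the function $f$ in that analysis is identically $k$; the recursion $u_{i+1} = u_i(1 - 1/k)$ with $u_0 = n$ then terminates in $O(k \log n)$ iterations, which equals the number of colours produced.

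The main task is to verify the $t$-UM property, and I would do so by a short inductive argument running from the top colour downward. Fix a hyperedge $h \in \E$; let $i_1 < i_2 < \ldots < i_m$ be the iterations in which at least one vertex of $h$ receives its final colour, and for each $j$ set $a_j = |h \cap V'_{i_j}|$, the number of $h$-vertices coloured in iteration $i_j$ (so that $\sum_j a_j = |h|$). Since the final colour assigned in iteration $\ell$ is $\ell$ itself, the top $\min\{|h|,t\}$ colours appearing in $h$ correspond to the largest $\min\{m,t\}$ of the indices $i_1, \ldots, i_m$, and it suffices to prove $a_{m-s} = 1$ for every $0 \leq s \leq \min\{m,t\}-1$.

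For the base case $s = 0$: all of $h \cap V_{i_m}$ lies in the single auxiliary colour class $V'_{i_m}$, since no $h$-vertex survives past iteration $i_m$; hence $h \cap V_{i_m}$ exhibits only one auxiliary colour, and the $(t+1)$-colourful property, which requires $\min\{a_m, t+1\}$ distinct auxiliary colours, forces $a_m = 1$. For the inductive step, assuming $a_m = \cdots = a_{m-s+1} = 1$ with $1 \leq s \leq \min\{m,t\}-1$, the set $h \cap V_{i_{m-s}}$ has $a_{m-s} + s$ vertices, splitting into $a_{m-s}$ vertices inside $V'_{i_{m-s}}$ that share one auxiliary colour and $s$ vertices outside contributing at most $s$ further auxiliary colours, so at most $1 + s$ distinct auxiliary colours appear; the $(t+1)$-colourful bound $\min\{a_{m-s}+s,\, t+1\} \leq 1 + s$, combined with the constraint $s \leq t-1$ which yields $1 + s \leq t < t+1$, forces the minimum to be $a_{m-s} + s$, giving $a_{m-s} = 1$. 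The subtle ingredient, and the step where care is required, is that the $(t+1)$-colourful property controls only the \emph{number} of distinct auxiliary colours appearing in $h \cap V_{i_{m-s}}$ rather than their distribution among the classes; the induction is what bridges this gap by bounding the contribution of $h$-vertices lying outside the chosen class $V'_{i_{m-s}}$.
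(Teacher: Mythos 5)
Your proposal is correct and follows exactly the route the paper intends: run \cref{alg:gen-CF-framework} with a $(t+1)$-colourful auxiliary colouring, get the $O(k\log n)$ colour count from the generic recursion $u_{i+1}=u_i(1-1/k)$, and then check that the output is $t$-UM rather than merely $t$-strong-CF as in \cite{hks09}. Your downward induction on the top colour classes of a hyperedge (showing each of the top $\min\{|h|,t\}$ classes has size one, using that the vertices coloured in a given iteration share a single auxiliary colour) is precisely the ``more careful analysis'' the paper alludes to but does not spell out, and it is carried out correctly.
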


Let $H=(V,\E)$ be a hypergraph where $V=\{v_1,\ldots,v_n\}$. Given a $t$-strong-CF-colouring $c$ of $V$ with $x$ colours define a colouring of $\binom V t$ by assigning every $t$-subset $S=\{v_{i_1},\ldots,v_{i_t}\}$ ($i_1 < i_2 \cdots < i_t$) of $V$ a colour $c(S)$ which is the sequence $(c(v_{i_1}),\ldots,c(v_{i_t}))$. It is easy to check that the resulting colouring is a $t$-subset-CF-colouring and uses at most $k^t$ colours. Hence, we have the following observation:

\begin{observation} \label{cl:t-strong}
	Let $H$ be a hypergraph with $\chi_{t\text{-strong-CF}}(H)=k$. Then $\CF^t(H)=O(k^t)$.
\end{observation}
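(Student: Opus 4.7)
The plan is to carry out the construction hinted at in the sentence just before the observation. Fix an arbitrary enumeration $v_1,\ldots,v_n$ of $V$, let $c$ be a $t$-strong-CF-colouring of $H$ using $k$ colours, and define a colouring $\psi$ of $\binom{V}{t}$ by
\[
\psi(\{v_{i_1},\ldots,v_{i_t}\}) \;:=\; (c(v_{i_1}),\ldots,c(v_{i_t})), \qquad i_1<i_2<\cdots<i_t.
\]
Since the codomain is contained in a set of size $k^t$, the colouring $\psi$ uses at most $k^t = O(k^t)$ colours, and all that remains is to verify that $\psi$ is a valid $t$-subset-CF-colouring.

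Fix any $h\in \E$ with $|h|>t$. Because $c$ is $t$-strong-CF and $\min\{|h|,t\}=t$, the hyperedge $h$ contains at least $t$ vertices whose colours are unique in $h$; pick any $t$ of them and order them as $u_1<u_2<\cdots<u_t$ according to the fixed enumeration of $V$, and set $S^\star := \{u_1,\ldots,u_t\} \in \binom{h}{t}$. For any other $t$-subset $S' \in \binom{h}{t}$ with $S'\neq S^\star$, some $u_\ell$ lies outside $S'$. Since $c(u_\ell)$ is unique in $h$ and $S'\subseteq h\setminus\{u_\ell\}$, no vertex of $S'$ has colour $c(u_\ell)$, so $c(u_\ell)$ appears in no coordinate of the tuple $\psi(S')$. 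By construction, however, $c(u_\ell)$ sits in position $\ell$ of $\psi(S^\star)$. Hence $\psi(S^\star)\neq \psi(S')$, proving that $S^\star$ is a uniquely coloured $t$-subset of $h$ under $\psi$, as required.

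There is no real obstacle here — the argument is a direct unpacking of the definitions. The one point worth stressing is why the \emph{ordered} encoding already suffices even though a $t$-subset is unordered: the strong uniqueness of each $c(u_\ell)$ in $h$ forces the value $c(u_\ell)$ to be absent from the colour tuple of every $t$-subset of $h$ that omits $u_\ell$, so the tuples are guaranteed to differ on at least one coordinate. The same reasoning would work with the multiset of colours as a label, giving the tighter bound $\binom{k+t-1}{t}$, but for the asymptotic statement the plain tuple encoding is enough and the total count is $O(k^t)$ for fixed $t$.
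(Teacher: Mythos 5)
Your proof is correct and takes essentially the same approach as the paper: the paper colours each $t$-subset by the ordered tuple of vertex colours under a $t$-strong-CF-colouring and leaves the verification as ``easy to check.'' Your argument supplies exactly that verification (the uniquely coloured vertex $u_\ell$ missing from any competing $t$-subset forces its colour to be absent from that subset's tuple), so there is nothing to add.
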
 

However, if the colouring $c$ of $V$ has the stronger property of being $t$-UM, the corresponding $t$-subsets CF-chromatic number is significantly smaller.

\begin{lemma}\label{lem:t-UM}
    	Let $H=(V,\E)$ be a hypergraph with $\chitum(H)=k$. Then $\CF^t(H)=O(tk)$. 
\end{lemma}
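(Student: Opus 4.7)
The plan is to lift a given $t$-UM colouring of $V$ to a colouring of $\binom{V}{t}$ by simply summing colours. Fix a $t$-UM colouring $c\colon V\to\{1,\ldots,k\}$ and define $\psi(S):=\sum_{v\in S} c(v)$. Since $\psi(S)$ lies in $\{t,t+1,\ldots,tk\}$, the colouring $\psi$ uses at most $tk-t+1=O(tk)$ distinct colours, which already matches the claimed bound; what remains is to prove that $\psi$ is $t$-subset-CF.

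To do so, fix a hyperedge $h\in\E$ with $|h|>t$. The $t$-UM hypothesis supplies vertices $v_1,\ldots,v_t\in h$ whose colours $c_1>c_2>\cdots>c_t$ are the $t$ largest in $h$ and are each realised by that single vertex. Let $S^*:=\{v_1,\ldots,v_t\}$. I would show that $S^*$ is uniquely coloured by proving that it is the only $t$-subset of $h$ attaining the maximum $\psi$-value on $\binom{h}{t}$. For any other $t$-subset $S\subseteq h$, set $m:=|S^*\setminus S|=|S\setminus S^*|\geq 1$ and split
\[
\psi(S^*)-\psi(S) \;=\; \sum_{v\in S^*\setminus S} c(v) \;-\; \sum_{w\in S\setminus S^*} c(w).
\]
Every vertex of $S^*\setminus S$ has colour at least $c_t$, and by the $t$-UM property the only vertices of $h$ with colour $\geq c_t$ are $v_1,\ldots,v_t$, so every $w\in S\setminus S^*$ satisfies $c(w)<c_t$. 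Integrality of colours then yields $\psi(S^*)-\psi(S)\geq m\cdot c_t - m(c_t-1) = m\geq 1$, so $\psi(S^*)$ is strictly larger than the $\psi$-value of every other $t$-subset of $h$, and in particular is unique in $\binom{h}{t}$.

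There is essentially no obstacle to overcome: the entire argument rests on the fact that the $t$-UM guarantee forces $S^*$ to be the unique $t$-subset of $h$ maximising $\sum c$. Note that this is exactly what a mere $t$-strong-CF colouring fails to provide---it does not specify \emph{which} $t$ vertices carry the unique colours---which is why the naive encoding used in \Cref{cl:t-strong} gives only the much weaker bound $O(k^t)$, whereas here one linear invariant (the sum) of a structured colouring suffices.
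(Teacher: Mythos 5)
Your proposal is correct and follows essentially the same route as the paper: colour each $t$-subset by the sum of the vertex colours under a $t$-UM colouring, so that the subset of the $t$ uniquely-coloured top vertices is the strict maximiser of the sum within each hyperedge of size greater than $t$, using at most $tk$ values. The paper simply asserts that the maximum sum is unique; your exchange-and-integrality argument makes that step explicit.
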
 

\begin{proof}
Let $\varphi \colon V \rightarrow \{1,\ldots,k\}$ be a $t$-UM-colouring of $H$.
Define $\phi \colon \binom{V}{t} \rightarrow \{1,\ldots,tk\}$ by $\phi (S) = \Sigma_{v\in S} \varphi(v)$. Clearly, the maximum colour given under $\phi$ in each hyperedge of size at least $t$ is unique.
\end{proof}

All results on $t$-strong CF-colouring of which we are aware rely on \cref{alg:gen-CF-framework} (via $(t+1)$-colourful auxiliary colourings), and therefore the resulting colourings are in fact $t$-UM. However, this $t$-UM property has not been used explicitly before, neither in the context of $t$-strong-CF-colouring, nor in the more general context of CF-colouring. In the following subsection we exploit this property, applying \cref{lem:t-UM}.

\subsection{Geometric applications}

\begin{corollary}\label{cor:pdUM}
Let $\D$ be a collection of $n$ pseudo-discs and let $P$ be a set of points in the plane. Then $\CF^t(H({\cal D},P))=O(t^2 \log n)$.  
\end{corollary}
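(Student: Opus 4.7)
The plan is to derive this corollary by composing Theorem~\ref{thm:Kesegh} with the three-step pipeline developed in Subsection~4.1: a $(t+1)$-colourful bound is lifted via Claim~\ref{cl:colourful2um} to a $t$-UM bound, which is then converted via Lemma~\ref{lem:t-UM} to a $t$-subset-CF bound. Concretely, one obtains the chain
\[
  \chi_{(t+1)\text{-colourful}}(H[V'])=O(t) \;\Longrightarrow\; \chi_{t\text{-UM}}(H)=O(t\log n) \;\Longrightarrow\; \chi^t_{CF}(H)=O(t^2\log n).
\]

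First I would establish the HLD property for $H(\D,P)$ and all its induced sub-hypergraphs. Treating each $p\in P$ as an infinitesimally small pseudo-disc around itself (small enough not to affect any existing containment), the pair $(\D,P)$ becomes a pseudo-disc configuration in the sense of Theorem~\ref{thm:Kesegh}, which then gives that $\Del(H(\D,P))$ is planar and therefore $H(\D,P)$ has the HLD property with constant $c=3$. The same reasoning applied to any $\D'\subseteq \D$ shows that the property is hereditary with the same constant.

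Next, for each $\D'\subseteq \D$ I would produce a $(t+1)$-colourful colouring of $H(\D',P)$ with $O(t)$ colours. Let $G$ be the auxiliary graph on $\D'$ whose edges are pairs of pseudo-discs that co-occur in some hyperedge of $H(\D',P)$ of size at most $t+1$. Lemma~\ref{lem:k_good_pairs} applied with $k=t+1$ bounds $|E(G)|\le 3e(t+1)|\D'|$, so $G$ is $O(t)$-degenerate and a greedy scheme yields a proper $O(t)$-colouring $\phi$. For any small hyperedge ($|h|\le t+1$) all pairs of $h$ are in $G$, so $\phi$ assigns $|h|$ distinct colours there; for a large hyperedge ($|h|>t+1$) I would invoke the shrinking property of pseudo-disc arrangements: after adjoining to $P$ one witness point per face of the arrangement of $\D'$ (a move that only adds hyperedges and preserves both the HLD constant and the edge count of $G$), every large hyperedge contains a sub-hyperedge of size exactly $t+1$, whose $t+1$ pairs are therefore all edges of $G$ and hence distinctly coloured. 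Claim~\ref{cl:colourful2um} with $k=O(t)$ then upgrades this uniform bound to $\chi_{t\text{-UM}}(H(\D,P))=O(t\log n)$, and Lemma~\ref{lem:t-UM} concludes $\chi^t_{CF}(H(\D,P))=O(t\cdot t\log n)=O(t^2\log n)$.

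The only delicate step is the $(t+1)$-colourful verification on large hyperedges: the graph $G$ a priori only constrains pairs that co-occur in small hyperedges, so without the shrinking argument that is special to pseudo-disc arrangements, there is no reason for a proper $G$-colouring to place $t+1$ distinct colours inside a very large hyperedge. Everything else in the pipeline is a mechanical composition of previously stated results; indeed, once HLD with $c=3$ is in hand, the shorter alternative is to invoke Theorem~\ref{thm:linear-delaunay} directly.
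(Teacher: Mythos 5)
The delicate step you flagged is exactly where the argument breaks: the ``shrinking property'' you invoke is false in this setting. Note the roles in $H(\D,P)$: the \emph{vertices} are the pseudo-discs and a hyperedge is the set of pseudo-discs containing a given point, so what you need is that every point of depth at least $t+2$ admits another point whose set of containing pseudo-discs is a \emph{subset} of the first one's and has size exactly $t+1$. That is a transplant of property (b) from \cref{sec:rectangles}, where one shrinks a \emph{range} around point vertices, and it fails when the ranges are the vertices. A counterexample with genuine discs: take $t+2$ unit discs whose centres are pairwise distinct and lie within $\varepsilon$ of the origin, and add many discs of radius $1/10$ centred on the unit circle about the origin, chosen so that every point $x$ with $1-2\varepsilon\le |x|\le 1+2\varepsilon$ lies in at least $t+2$ of them; no small disc contains the origin, and all boundaries pairwise cross at most twice. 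Any point lying in some but not all of the big discs lies in that annulus, hence in at least $t+2$ small discs. Consequently no point of the plane --- in particular no witness point of any face --- is contained in a subfamily of the big discs of size exactly $t+1$ and in nothing else; worse, no pair of big discs occurs in any hyperedge of size at most $t+1$, so your graph $G$ has no edge between big discs and a proper colouring of $G$ may make all of them monochromatic, so the hyperedge defined by the origin is not even $2$-colourful. The one-shot degeneracy colouring of a single graph therefore cannot work; the hereditary structure has to be exploited combinatorially, as in the proof of \cref{thm:Delaunay}: induct on the vertex set, recomputing the auxiliary graph on each induced sub-hypergraph, so that by the time a large hyperedge has shrunk to size $t+1$ its surviving pairs do become constraints and remain distinctly coloured when the removed vertices are re-inserted.

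The rest of your pipeline is sound, and your closing remark is in fact the intended argument: once \cref{thm:Kesegh} (with points viewed as infinitesimally small pseudo-discs) gives the HLD property with $c=3$, hereditarily, \cref{thm:linear-delaunay} (equivalently, \cref{thm:Delaunay} followed by \cref{lem:t-UM}) yields $\CFPt(H(\D,P))=O(t^2\log n)$ at once --- the paper makes exactly this observation just after the corollary. The paper's own stated proof is even more direct: it cites the known result of Horev \etal~\cite{hks09} that such hypergraphs (more generally, intersection hypergraphs of families with linear union complexity with respect to points) admit a $t$-UM-colouring with $O(t\log n)$ colours, and then applies \cref{lem:t-UM}. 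So keep the composition colourful $\to$ $t$-UM $\to$ $t$-subset-CF, but obtain the colourful step by the vertex-removal induction or by citation, not by a geometric shrinking argument.
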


\begin{proof}
It is known that $H({\cal D},P)$ admits a $t$-UM-colouring with $O(t \log n)$ colours\footnote{The more general result in \cite{hks09} applies to families with linear union complexity. It is well known that this includes pseudo-disc families.} \cite{hks09}. By \cref{lem:t-UM}, the bound follows. 
\end{proof}

Similarly, since the intersection hypergraph of a family of $n$ axis-parallel rectangles with respect to points admits a $t$-UM-colouring with $O(t \log^2  n)$ colours \cite{hks09}, we have:

\begin{corollary}\label{cor:recCF}
Let $\R$ be a collection of $n$ axis-parallel rectangles and let $P$ be a set of points in the plane. Then $\CF^t(H({\cal R},P))=O(t^2 \log^2 n)$.
\end{corollary}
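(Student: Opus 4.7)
The plan is to follow exactly the same pattern as the proof of Corollary~\ref{cor:pdUM}: produce a $t$-UM-colouring of $H(\R,P)$ with few colours, then apply Lemma~\ref{lem:t-UM} to convert it into a $t$-subset-CF-colouring at a multiplicative cost of $O(t)$. Concretely, I would assert that $\chitum(H(\R,P)) = O(t \log^2 n)$ and then invoke Lemma~\ref{lem:t-UM} to obtain $\CF^t(H(\R,P)) = O(t \cdot t\log^2 n) = O(t^2 \log^2 n)$, which is the desired bound.

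The one point that needs a short justification is that the known upper bound for axis-parallel rectangles is indeed a $t$-UM bound and not merely a $t$-strong-CF one. For this I would appeal to the discussion preceding Corollary~\ref{cor:pdUM}: the $t$-strong-CF bounds in \cite{hks09} are all produced by running Algorithm~\ref{alg:gen-CF-framework} with a $(t+1)$-colourful auxiliary subroutine, and by Claim~\ref{cl:colourful2um} the resulting colourings are automatically $t$-UM. Since the intersection hypergraph of points with axis-parallel rectangles is known to admit hereditarily a $(t+1)$-colourful colouring with $O(t \log n)$ colours (this is the input used in \cite{hks09} to derive the $O(t \log^2 n)$ $t$-strong-CF bound in this setting), Claim~\ref{cl:colourful2um} yields $\chitum(H(\R,P)) = O(t \log^2 n)$.

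With that in hand, applying Lemma~\ref{lem:t-UM} is entirely mechanical: given a $t$-UM-colouring $\varphi$ with colour range $\{1,\dots,k\}$, the sum colouring $\phi(S)=\sum_{v\in S}\varphi(v)$ on $\binom{V}{t}$ is a $t$-subset-CF-colouring using at most $tk$ colours, since in every hyperedge of size at least $t$ the $t$-subset consisting of the $t$ largest-$\varphi$ vertices has strictly the largest sum.

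I do not expect any genuine obstacle here; the proof is essentially bookkeeping, with the only subtle point being the promotion of the cited $t$-strong-CF bound to a $t$-UM bound. If one preferred not to rely on the ``$t$-strong-CF colourings produced by Algorithm~\ref{alg:gen-CF-framework} are automatically $t$-UM'' observation, a self-contained alternative would be to re-run Algorithm~\ref{alg:gen-CF-framework} directly on $H(\R,P)$ with a hereditary $(t+1)$-colourful colouring of the rectangle-point incidence hypergraph as the subroutine and invoke Claim~\ref{cl:colourful2um}; this yields the same $O(t\log^2 n)$ $t$-UM bound, after which Lemma~\ref{lem:t-UM} again concludes the proof.
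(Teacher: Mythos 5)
Your proposal is correct and follows the paper's own route: the paper likewise cites the $O(t\log^2 n)$ bound of \cite{hks09} for axis-parallel rectangles, notes (in the discussion before Corollary~\ref{cor:pdUM}) that those colourings arise from \cref{alg:gen-CF-framework} with $(t+1)$-colourful subroutines and are therefore $t$-UM, and then applies \cref{lem:t-UM} to get $O(t^2\log^2 n)$. No gap.
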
 

\subsection{Colouring \texorpdfstring{$t$}{t}-subsets in hypergraphs with linear Delaunay graph}

In this section we prove \cref{thm:linear-delaunay}. Note that \cref{cor:pdUM} above is an immediate consequence of \cref{thm:linear-delaunay} since, by \cref{thm:Kesegh}, the corresponding hypergraphs have a HLD property, while \cref{cor:recCF} does not follow from \cref{thm:linear-delaunay}. In fact, for the corresponding hypergraphs their Delaunay graphs may have a quadratic number of edges. An easy example follows from drawing $n/2$ horizontal line segments and $n/2$ vertical line segments so that their Delaunay graph is the complete bipartite graph $K_{\frac{n}{2},\frac{n}{2}}$.

The proof of \cref{thm:linear-delaunay} uses \cref{lem:t-UM}. Therefore, we start by showing that for a wide class of hypergraphs $H$ there is a $t$-UM-colouring with a `small' number of colours:

\begin{theorem}\label{thm:Delaunay}
	Let $H=(V,\E)$ be a hypergraph with the HLD property for some $c\in\setR_{>0}$. Then $\chitum(H)=O(c t \log |V|)$. 
\end{theorem}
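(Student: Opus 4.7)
I would reduce the theorem to a hereditary bound on the $(t+1)$-colourful chromatic number and then invoke \cref{cl:colourful2um}. Concretely, it suffices to show that $\chi_{(t+1)\text{-colourful}}(H[V']) = O(ct)$ for every $V'\subseteq V$; combined with \cref{cl:colourful2um} applied with $k=O(ct)$, this yields $\chitum(H) = O(ct \log |V|)$. Because the HLD property is, by definition, quantified over all subsets and so passes to induced sub-hypergraphs with the same parameter $c$, it is enough to prove the bound for $H$ itself; the same argument then applies verbatim to each $H[V']$.

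To build the $(t+1)$-colourful colouring with $O(ct)$ colours I would use the auxiliary graph $G$ on vertex set $V$ whose edges are the pairs $\{u,v\}$ contained in some hyperedge of $H$ of size at most $t+1$. By \cref{lem:k_good_pairs} applied with parameter $k=t+1$, $|E(G)| \leq ce(t+1)|V| = O(ct|V|)$, and by the hereditary HLD property the same bound holds for the analogous graph of any induced sub-hypergraph. Hence $G$ is $O(ct)$-degenerate, and a greedy colouring in a degeneracy order gives $\chi(G) = O(ct)$. A proper colouring $\phi$ of $G$ is rainbow on every hyperedge of $H$ of size at most $t+1$, which already verifies the $(t+1)$-colourful condition for those hyperedges.

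The main obstacle is handling hyperedges $h$ with $|h|>t+1$: a proper colouring of $G$ does not by itself obviously enforce $\ge t+1$ distinct colours on such an $h$. Closing this gap is the technical heart of the argument. I plan to tackle it by applying \cref{lem:k_good_pairs} locally inside $H[h]$, which is itself HLD with the same parameter $c$: the resulting Clarkson--Shor bound on the pairs of $h$ that cohabit in sub-hyperedges of size at most $t+1$ should supply enough constraints on $\phi$ restricted to $h$ to force at least $t+1$ distinct colours there, possibly after a small combinatorial refinement of $\phi$. Once this local step is in place, the hereditary $(t+1)$-colourful bound of $O(ct)$ is established and \cref{cl:colourful2um} immediately delivers $\chitum(H) = O(ct \log |V|)$, completing the proof.
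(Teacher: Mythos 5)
Your reduction (a hereditary $O(ct)$ bound on the $(t+1)$-colourful chromatic number, then \cref{cl:colourful2um}) and your use of \cref{lem:k_good_pairs} to get an $O(ct)$-degenerate auxiliary graph match the paper's skeleton, but the step you yourself flag as the technical heart is a genuine gap, and the patch you sketch does not close it. A proper colouring of your static graph $G$ (pairs lying in a hyperedge of $H$ of size at most $t+1$) controls only the small hyperedges, and applying \cref{lem:k_good_pairs} ``locally inside $H[h]$'' cannot rescue the large ones, because $H[h]$ may contain no small hyperedges at all. Concretely, take $t=2$ and $\E=\{V\}$ with $|V|$ large: this hypergraph has the HLD property (traces have size $2$ only when $|V'|=2$), your graph $G$ is empty, and $H[h]=H$ has no hyperedge of size at most $3$, so the local Clarkson--Shor bound yields no constraints whatsoever; a constant colouring is proper for $G$ and survives any refinement driven by $H[h]$, yet $h=V$ needs three distinct colours. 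The constraints that actually force extra colours on a large hyperedge $h$ come from induced sub-hypergraphs $H[V']$ with $V'\subsetneq V$, where the \emph{trace} $h\cap V'$ becomes a small hyperedge; none of these are visible in a single graph computed once from $H$.

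The paper's proof supplies exactly this missing mechanism via an adaptive peeling. It argues by induction on $|V|$: form the auxiliary graph of the \emph{current} hypergraph (pairs lying together in a small hyperedge of it), use \cref{lem:k_good_pairs} to find a vertex $v$ of degree $O(ct)$ there, recursively produce a $(t+1)$-colourful colouring of $H[V\setminus\{v\}]$ with the same palette, and finally colour $v$ avoiding its neighbours. Correctness for a hyperedge $h\ni v$ splits by size: if $|h|\le t+1$ all its pairs are edges of the auxiliary graph at some level, so $h$ ends up rainbow, while if $|h|\ge t+2$ the trace $h\setminus\{v\}$ is itself a hyperedge of $H[V\setminus\{v\}]$ of size at least $t+1$ and already carries $t+1$ distinct colours by the induction hypothesis. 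So the fix is not a local refinement inside $H[h]$ but recomputing the auxiliary graph on every induced sub-hypergraph along a removal order chosen greedily by degree (your degeneracy bound does hold hereditarily, so the counting part of your plan is fine); with that change your outline becomes the paper's argument, and \cref{cl:colourful2um} finishes as you intended.
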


\cref{thm:Delaunay} generalises previous results for the intersection hypergraph of pseudo-discs and points \cite{hks09}, and for the intersection hypergraph of points and discs\footnote{The proof in \cite{ABG+05} relies on disc-specific arguments that fail for pseudo-discs.} \cite{ABG+05}.

\begin{proof}[Proof of \cref{thm:Delaunay}]
	By \cref{cl:colourful2um}, it suffices to prove that any induced sub-hypergraph $H[V']$ on some $V' \subset V$ admits a $(t+1)$-colourful colouring with at most $2cet+1$ colours. Such a colouring can be obtained
	by induction on $|V|$. Assume we proved the assertion for hypergraphs with $|V|-1$ vertices. 
	Define a graph $G_t$ in which we connect two vertices if they participate together in some hyperedge of size at most $t$ of $H$. 
	
	By \cref{lem:k_good_pairs}, the number of edges in $G_t$ is at most $c|V|et$, hence the average degree of $G_t$ is at most $2cet$. So there must be a vertex $v \in V$ whose degree in $G_t$ is at most $2cet$. This means that $v$ belongs to at most $cet$ hyperedges each of which has cardinality at most $t$. By the induction hypothesis, $H[V \setminus \{v\}]$ admits a $(t+1)$-colourful colouring with at most $2cet+1$ colours. (Note that $H[V \setminus \{v\}]$ satisfies the conditions of the theorem.) Assign to $v$ a colour that differs from the colours of all its neighbours in $G_t$. To see that the resulting colouring is a valid $(t+1)$-colourful colouring consider a hyperedge $h$. We can assume that $v \in h$ for otherwise $h$ satisfies the required conditions by the induction hypothesis.
	Also if $|h| > t$ we are done by the induction hypothesis as $h\setminus\{v\}$ already contains the required number of colours. Otherwise if $|h| \leq t$ then by the induction hypothesis $h\setminus\{v\}$ consists of pairwise distinct coloured vertices. Since $v$ is a neighbour in $G_t$ of any vertex in $h\setminus\{v\}$ it is assigned a distinct colour. This completes the induction step. 
\end{proof}

We are ready to prove \cref{thm:linear-delaunay}.

\begin{proof}[Proof of \cref{thm:linear-delaunay}]
By \cref{thm:Delaunay}, $H$ admits a $t$-UM-colouring with $O(c t \log n)$ colours. In view of \cref{lem:t-UM}, it follows that $\CF^t(H)=O(c t^2 \log n)$.

As for the tightness of this bound, consider for each $n\in\Positives$ the hypergraph $H_n=H(P,{\cal I})$ where $P=[n]$ is the set $\{1,\ldots,n\}$ and ${\cal I}$ is the family of all intervals on the real line. 
Clearly, $H_n$ has the HLD property (with $c=1$).
Let $\varphi$ be a $t$-subset-CF colouring of $H_{2n+1}$. Since there is an interval $I\in {\cal I}$ such that $I\cap P=P$, $I$ contains a uniquely-coloured $t$-subset $S$. Hence there exists $I'\in {\cal I}$ such that $I'$ does not contain $S$ and $|I'|\ge n$. Let $P'=I'\cap P$. The restriction of $\varphi$ to $H_{2n+1}[P']$ is still a $t$-CF-colouring but does not use the colour $\varphi(S)$. Hence the recurrence relation $\chi^t_{CF}(H_{2n+1}) \geq 1 + \chi^t_{CF} (H_n)$ is satisfied and therefore $\chi^t_{CF}(H_n)= \Omega (\log n)$.
\end{proof}

\section{Comparing \texorpdfstring{$t$}{t}-subset-CF-colouring and CF-colouring}
\label{sec:negative-results}

\subsection{\texorpdfstring{$\CFPt$}{\textchi\^{}t\_{CF}} is not bounded in terms of \texorpdfstring{$\CF$}{\textchi\texttwosuperior\_{CF}}}\label{subsec:comparing}

In this subsection we prove Theorem \ref{thm:unbounded} and show that there exist hypergraphs with bounded $\CF$ but arbitrarily large $\CFPt$. 

\begin{proof}[Proof of \cref{thm:unbounded}]

Consider $t,n\in \mathbb{N}$ such that $n\ge t \ge 2$ and let $H_n=(V,\E)$ with $V=\{1,2,\ldots,n\}$ and $\E=\{\{1\}\cup s \mid s\in {\binom{\{2,3,\ldots,n\}}{t+1}}\}$.

It is easy to see that  $\CF(H_n)=2$, with two colour classes $\{1\}$ and $\{2,3,\ldots,n\}$.

On the other hand, for any $k \in \setN$, there exists $n\in \mathbb{N}$ such that $\CFPt(H_n)>k$. Let
\[
n = \twr_{t-1}(3(b-t+1) k \log k )+1,
\]
where $b=\twr_t(3k\log k)$, and the {\em tower function} is defined by $\twr_1(m)=m$ and $\twr_t(m)=2^{\twr_{t-1}(m)}$.

Indeed, by the multicolour hypergraph Ramsey theorem~\cite{ER52}, for any $2 \leq r<s$ and $N \geq   \twr_r ( 3(s-r) k \log k ) $, in any $k$-colouring of all the $r$-subsets of an $N$-element set there exists a monochromatic $s$-subset, namely an $s$-subset all of whose $r$-element subsets have the same colour.

Assume to the contrary that $\varphi$ is a $t$-subset-CF colouring of $V$ with $k$ colours.
Define a colouring $\psi$ of $\binom{     \{  2,3,\ldots,n \}       }{t-1}$
by $\psi(\{i_1,i_2,\ldots,i_{t-1}\})      =   \varphi(\{1,i_1,i_2,\ldots,i_{t-1}\}) $.

Since $n-1 = \twr_{t-1}(3(b-t+1) k \log k)$, there exists some $b$-subset $B$ of $\{ 2,  \ldots ,n  \}$, all of whose $(t-1)$-subsets are assigned the same colour by $\psi$. In other words, all $t$-subsets of type $\{\{1,i_1,i_2,\ldots,i_{t-1}\}\colon i_1,i_2,\ldots,i_{t-1} \in B\}$ are assigned the same colour in $\varphi$.

Since $|B|=b=\twr_t(3k\log k)$, using the same result as above, we can deduce that there is a subset $B' \subset B$, with $|B'|=t+1$, all of which $t$-subsets receive the same colour under $\varphi$. Hence, $e=B' \cup \{1\}$ is a hyperedge of $H$ with no uniquely coloured $t$-subset under $\varphi$. Indeed, all $t$-subsets of $e$ containing 1 get the same colour, and all $t$-subsets that are contained in $B'$ get the same (possibly different) colour, a contradiction.
\end{proof}

\subsection{\texorpdfstring{$\CFP$}{\textchi\texttwosuperior\_{CF}} can be much smaller than \texorpdfstring{$\CF$}{\textchi\_{CF}}}

\subsubsection{Union of hyperedges}
\label{subsec:union}
Here  we prove \Cref{thm:union}.

\begin{proof}[Proof of Theorem~\ref{thm:union}]
By \cref{thm:Delaunay}, $H$ admits a 2-UM-colouring $\psi$ with $O(c\log |V|)$ colours. Define the following pairs-colouring:

\begin{align*}
    \phi \colon \{x,y\} \in \binom{V}{2} &\mapsto \left(\psi(x)+\psi(y), \begin{cases}0 \text{ if $\psi(x)=\psi(y)$,} \\ 1 \text{ otherwise} \end{cases}\right).
\end{align*} 

Consider some hyperedge $h = h_1 \cup h_2$ of $H^{\cup}$ where $ h_1,h_2 \in \E$ with $|h| \geq 3$.
Let $v_1,v_2,v_3$ be three vertices in $h$ with the top three maximal values of $\psi(h)$ so $\psi(v_1)\geq \psi(v_2)\geq \psi(v_3)$. Then (even without considering the second coordinate of $\phi$), the only case in which $\{v_1,v_2\}$ is not the uniquely-coloured pair is when $\psi(v_1)> \psi(v_2)= \psi(v_3)$. In this case, $\psi(v_2)= \psi(v_3)$ appears exactly once in each of $h_1,h_2$ (in one as the maximal colour, in the other as the second-maximal colour), and $\{v_2,v_3\}$ is  the uniquely-coloured pair -- here we use the second coordinate of $\phi$. 
\end{proof}

\subsubsection{Points on a line with respect to interval unions}\label{subsec:interval_union}
In some geometric hypergraphs, the constants of \cref{thm:union} can be improved:

\begin{theorem}\label{thm:union_intervals}
Let $H=H(P,\mathcal I)$ with $V$ a set of $n$ points on a line $\ell$ and $\mathcal I$ the set of intervals of $\ell$. Then $\CFP(H^{\cup}) \leq 4\log n$.
\end{theorem}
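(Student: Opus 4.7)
The plan is to apply the approach of \cref{thm:union} to $H = H(P,\mathcal{I})$ and to track the constants carefully, exploiting that the Delaunay graph of $H$ is a path, so $H$ enjoys the HLD property with $c = 1$. By \cref{thm:Delaunay}, $H$ admits a $2$-UM colouring with $O(\log n)$ colours; explicitly I would construct such a colouring $\psi\colon P \to \{1, \ldots, 2\lceil \log n \rceil\}$ via a recursive-median construction. Namely, order the points as $p_1 < p_2 < \cdots < p_n$, assign the two middle points $p_m, p_{m+1}$ (with $m = \lfloor n/2 \rfloor$) the two highest colours of the current palette, and recursively $2$-UM colour the outer halves using the remaining palette. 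An interval either lies in one half (handled inductively) or contains both middle points (where the two highest colours are uniquely attained), so $\psi$ is $2$-UM.

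Then, following the proof of \cref{thm:union}, I would define $\phi(\{x,y\}) := (\psi(x)+\psi(y),\ \mathbb{1}[\psi(x) = \psi(y)])$. The correctness argument transfers verbatim to $H^{\cup}$: for any $h = I_1 \cup I_2$ with $|h|\ge 3$, let $v_1,v_2,v_3 \in h$ be the three points with largest $\psi$-values. The pair $\{v_1,v_2\}$ is $\phi$-uniquely coloured in $h$ unless $\psi(v_2) = \psi(v_3)$. In the exceptional case, $\psi(v_2)$ appears at most once in each of $I_1$ and $I_2$ by $2$-UM on $H$, and thus exactly once in each (since it appears twice in $h$), so $\{v_2,v_3\}$ is the uniquely $\phi$-coloured pair, using the second coordinate of $\phi$.

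The principal obstacle is tightening the colour count to the claimed $4\log n$. A direct enumeration of the image of $\phi$ gives at most $3k - 3$ values where $k$ is the number of $2$-UM colours used by $\psi$, which combined with $k \leq 2\lceil \log n \rceil$ yields only $6\log n + O(1)$. To achieve the sharper constant, I would instead derive the recurrence $\CFP(H^{\cup})(n) \le \CFP(H^{\cup})(\lceil n/2 \rceil) + 4$ by splitting $P$ at its median into left/right halves $L,R$, recursively pair-colouring each half with the same palette, and introducing four new colours for the crossing pairs $\{p,q\}$ with $p \in L$, $q \in R$, indexed by whether $p$ is the rightmost point of $L$ and whether $q$ is the leftmost point of $R$. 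A case analysis on whether the hyperedge $h$ is a single crossing interval or a disjoint union, and on whether it contains the interface points $p_m, p_{m+1}$, would then identify the uniquely coloured pair either at the interface or from the within-half recursion; the main technical point will be to ensure that the within-half recursive pair colour used to certify a non-interface hyperedge is not duplicated in the complementary half, and the four fresh interface colours should provide exactly the slack needed for this.
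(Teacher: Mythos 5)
Your first construction (HLD with $c=1$, a recursive-median $2$-UM colouring $\psi$, and the pair colouring $\phi(\{x,y\})=(\psi(x)+\psi(y),\ 0$ or $1$ according to whether $\psi(x)=\psi(y))$ borrowed from \cref{thm:union}) is correct, but as you yourself compute it only gives about $3k-3\approx 6\log n$ colours, so it proves $\CFP(H^{\cup})=O(\log n)$ and not the stated bound $4\log n$; the whole content of this theorem is the constant. Your route to the constant --- the recurrence $\CFP(n)\le\CFP(\lceil n/2\rceil)+4$ with the two halves recursively coloured from the \emph{same} palette and four fresh colours for crossing pairs indexed by (is $p$ the rightmost point of $L$?, is $q$ the leftmost point of $R$?) --- has a genuine gap, and it is exactly the case you flag. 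Take a hyperedge $h=I_1\cup I_2$ of $H^{\cup}$ with $I_1\subseteq L$, $I_2\subseteq R$, $|I_1|,|I_2|\ge 2$, where $I_1$ avoids the rightmost point of $L$ and $I_2$ avoids the leftmost point of $R$ (e.g.\ $I_1=\{2,3\}$ and $I_2$ its translate in $R$). Then every crossing pair of $h$ receives the same interface colour, so no crossing pair is unique, and the unique pair must lie inside $I_1$ or inside $I_2$. But those pairs are coloured by two independent runs of the same deterministic recursion on halves of equal size, so the single pair inside $I_1$ and the single pair inside $I_2$ can receive identical colours, leaving $h$ with no uniquely coloured pair at all. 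The four fresh interface colours are never assigned to within-half pairs, so they give no protection against these cross-half collisions; the ``slack'' you hope for does not exist in this framework, and nothing in your sketch repairs it.

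The paper gets the constant $4$ by a different mechanism, which is the idea your proposal is missing: it uses only an ordinary ($1$-)UM colouring $\psi$ of $H$ with $\lceil\log n\rceil$ colours (one median point per level, not two), and then colours pairs \emph{non-uniformly according to adjacency on the line}: a consecutive pair $\{i,i+1\}$ is coloured by $\bigl(\max\{\psi(i),\psi(i+1)\},\varepsilon\bigr)$ where $\varepsilon$ records which of the two colours is larger, while a non-consecutive pair $\{i,j\}$ is coloured by $(\psi(i)+\psi(j),0)$. The first coordinate takes at most $2\lceil\log n\rceil-1$ values, giving roughly $4\log n$ colours, and the witness for a hyperedge that is a union of at most two intervals is either a consecutive pair sitting at the unique $\psi$-maximum of one interval or the (non-consecutive) pair formed by the two interval maxima. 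This asymmetric treatment of consecutive versus non-consecutive pairs, exploiting the one-dimensional structure, is what buys $4\log n$ instead of the $6\log n$ your global sum-based colouring yields; your divide-and-conquer alternative would need a new idea to control collisions between the two halves before it could serve as a substitute.
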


\begin{remark}
The hypergraph $H^{\cup}$ in \cref{thm:union_intervals} has a geometric interpretation as (isomorphic to) a hypergraph whose vertices are points on the moment curve in $\setR^4$, and whose hyperedges are induced by intersections with half-spaces. Indeed, a hyperplane in $\setR^4$ intersects the moment curve at most 4 times, and for any choice of up to 4 points on the moment curve in $\setR^4$ there exists a hyperplane that intersects the moment curve exactly at these points.
\end{remark}

\begin{proof}
Without loss of generality, the vertex set is $V=\{1,2,\ldots,n\}$ and $n=2^s-1$ for some integer $s=\lceil \log n\rceil$. Build a vertex colouring $\psi\colon V \to \{1,\dots,s\}$ as follows. The midpoint $(n+1)/2=2^{s-1}$ receives $\psi(2^{s-1})=s$. Each of the two halves (from $1$ to $2^{s-1}-1$ and from $2^{s-1}+1$ to $2^s -1$) has $2^{s-1} -1$ elements, colour them recursively with the colours from $1$ to $s-1$. It is easy to see that $\psi$ is a UM-colouring of $H$.

As for the pair colouring, define 
\begin{align*}
    \phi &\colon \{i,i+1\} \mapsto \left( \max \{\psi(i), \psi(i+1)\}, \begin{cases}0 \text{ if $\psi(i)<\psi(i+1)$,} \\ 1 \text{ otherwise} \end{cases}\right),
    \intertext{and for non-adjacent vertices (that is, $|i-j|>1$):}
    \phi &\colon \{i,j\} \mapsto (\psi(i)+ \psi(j), 0).
\end{align*} 
To check that $\phi$ is a pairs-CF colouring of $H^{\cup}$, we have to consider two types of hyperedges: those consisting of a single integer interval (of the form $\iinterval i j$ with $i<j$) and those consisting of two disjoint intervals (of the form $\iinterval i j\cup \iinterval k l$  where $i \leq j <k \leq l$). In the first case, $\psi$ has a unique maximum $m$ over $\iinterval i j$ and hence at least one of the pair-colours $(m,0)$ and $(m,1)$ is present exactly once in $\phi$. In the second case, $\psi$ attains respective maxima $m$ and $m'$ on each of the intervals. If $m\neq m'$ then at least one of the colours $(\max\{m,m'\},0)$ and $(\max\{m,m'\},1)$ appears exactly once, whereas if $m=m'$ then the colour $(2m,0)$ is present exactly once in the hyperedge.
\end{proof}

In \cref{subsec:comparing} we described a hypergraph whose $t$-subset CF-chromatic number is much larger than its CF-chromatic number. The hypergraph $H^{\cup}$ in \cref{thm:union_intervals} demonstrates the opposite situation: $\CFP(H^{\cup})=O(\log n)$, while
clearly $\CF(H^{\cup})=n$ and hence proves Theorem \ref{thm:unbounded2}.

However, this is not very informative since the underlying lower bound
is due to the hyperedges of size $2$, which we disregard for the pairs-CF colourings. It is therefore natural to consider the CF-chromatic number of $H^{\cup}$ restricted to its hyperedges of size at least $3$.

\cref{thm:LBunion} below shows that even if we consider only hyperedges of size at least 3 the same phenomenon holds -- the pairs-CF chromatic number of $H^{\cup}$ is still at most logarithmic in the CF-chromatic number of $H^{\cup}$.

\begin{theorem}\label{thm:LBunion}
Let $H^{\cup}$ be as in \cref{thm:union_intervals}. The sub-hypergraph $H' \subset H^{\cup}$ containing all hyperedges of size at least $3$ has
\[\CF(H') \geq \sqrt{n-1}.\]
\end{theorem}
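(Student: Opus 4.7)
The plan is a pigeonhole argument on the colour pattern of consecutive vertex pairs. Let $\varphi$ be any CF-colouring of $H'$ using $k$ colours; I shall show that $n - 1 \leq k^2$. Following \cref{thm:union_intervals} I identify the vertex set with $\{1,\ldots,n\}$. For each $i \in \{1, \ldots, n-1\}$ put $\pi(i) = (\varphi(i), \varphi(i+1))$, and call position $i$ \emph{monochromatic} when $\varphi(i) = \varphi(i+1)$ and \emph{bichromatic} otherwise.

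The first step bounds the number of monochromatic positions. If $i$ is monochromatic with common colour $a$, then $a$ appears at no vertex outside $\{i, i+1\}$: for every $j \notin \{i, i+1\}$ the triple $\{i, i+1, j\}$ is the union of at most two intervals of $\{1,\ldots,n\}$ of size $3$, hence a hyperedge of $H'$, and this hyperedge can be conflict-free only if $\varphi(j) \neq a$. Consequently each colour labels at most one monochromatic position, leaving at most $k$ monochromatic positions and at least $n - 1 - k$ bichromatic ones.

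The second step applies pigeonhole to bichromatic labels, which take values in the set of $k(k-1)$ ordered pairs of distinct colours. If $n - 1 - k > k(k-1)$, i.e.\ $n - 1 > k^2$, some two bichromatic positions $i < j$ satisfy $\pi(i) = \pi(j) = (a, b)$ with $a \neq b$. The condition $a \neq b$ rules out $j = i+1$ (which would force $\varphi(i+1) = a$ and $\varphi(i+1) = b$ simultaneously), hence $j \geq i+2$, and the four-element set $\{i, i+1, j, j+1\}$ is either a single integer interval (when $j = i+2$) or a union of two disjoint intervals, so a hyperedge of $H'$. Its colour multiset is $\{a, b, a, b\}$, which contains no uniquely coloured element, contradicting the CF property of $\varphi$. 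Therefore $n-1 \leq k^2$ and $\CF(H') \geq \sqrt{n-1}$. The only delicate point is ensuring the witness hyperedge has four distinct vertices and genuinely lies in $H'$, which the $a \neq b$ condition secures by forcing $j \geq i+2$.
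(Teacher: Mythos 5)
Your proof is correct and takes essentially the same route as the paper: a pigeonhole argument showing that no ordered colour pattern can occur at two distinct consecutive positions, since the union of the two consecutive pairs would be a hyperedge of $H'$ with no uniquely coloured vertex, yielding $n-1\leq \CF(H')^2$. Your explicit split into monochromatic and bichromatic positions merely handles the degenerate cases (a repeated pattern $(a,a)$, or two adjacent positions) that the paper's one-line count treats implicitly.
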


\begin{proof}
Consider a CF-colouring of $H'$ with $\CF(H')$ colours. For every ordered pair of colours $(a,b)$ there is at most one pair of consecutive vertices that are coloured with $a$ and $b$ in this order, for otherwise there would be a hyperedge of size $4$ where both colour $a$ and $b$ appear twice.
Since there are $n-1$ consecutive pairs of vertices and $\CF(H')^2$ ordered pairs of colours, we have $\CF(H')^2 \geq n-1$ and the inequality follows.
\end{proof}

\bibliography{references}

\end{document}